\journalname{JOTA}
\begin{document}

\title{Global optimization on a metric space with a graph and an application to PBVP}


\author{Abhik Digar   \and  G. Sankara Raju Kosuru }

\institute{Department of Mathematics\\
             Indian Institute of Technology Ropar \\
             Rupnagar - 140 001, Punjab, India.\\
             abhikdigar@gmail.com \and raju@iitrpr.ac.in  
}

\date{Received: date / Accepted: date}

\maketitle

\begin{abstract}

In this article we introduce a new type of cyclic contraction mapping on a pair of subsets of a metric space with a graph and prove best proximity points results for the same. Also, we demonstrate that the number of such points is same with the number of connected subgraphs. Hereafter, we introduce a fixed point mapping obtained from the aforesaid cyclic contraction and prove some fixed point theorems which will be used to find a common solution for a system of periodic boundary value problems. Our results unify and subsume many existing results in the literature

%
\end{abstract}
\keywords{best proximity point \and fixed point \and G-cyclic contraction \and metric space with graph \and periodic boundary value problem}
\subclass{47H10 \and  34B15 \and 54H25}


\section{Introduction and Preliminaries}


Let $(X,d)$ be a metric space and $G$ be a directed graph endowed on $X$ with the vertex set $V(G)=X$ and the edge set $E(G).$ Assume that every trivial loop is an edge of $G$ (i.e., $(x,x)\in E(G)~\mbox{for}~x\in X)$ and $G$ doesn't contain any parallel edge.
 In this case, we call such a triple $(X,d,G)$ as a metric space with a graph.  
 The edge set $E(G)$ is said to have quasi-order if $(x,y), (y,z)\in E(G)$ for $x, y, z\in X,$ then $(x,z)\in E(G)$ (\cite{Jachymski 2008}). For any vertices $x$ and $y$ in $G,$ a path from $x$ to $y$ is a finite sequence of vertices $x=x_0, x_1,...,x_n=y$ in $G$ with $(x_{i-1}, x_i)\in E(G)$ for $1\leq i\leq n$, for some $n\in \mathbb{N}.$
   Also the graph $G$ is said to be connected if for every pair of vertices $x$ and $y,$ there exists a path from $x$ to $y$ in $G$. The graph $G$ is known as weakly connected if the undirected graph $\tilde{G}$ is connected, where $V(\tilde{G})=V(G)$ and $E(\tilde{G})=E(G)\bigcup \{(y,x):(x,y)\in E(G)\}$). 
For any vertex $x$ in $G,~[x]_{\tilde{G}}=\{y\in V(G):~\mbox{there is a path from}~x~\mbox{to}~y$ in $\tilde{G}\}.$  
Further $A$ is said to have the property $(*)$ if for any sequence $\{x_n\}$ in $A$ with $\displaystyle \lim_{n\to \infty} x_n=x$ and $(x_n,x_{n+1})\in E(G),$ we have $(x_n,x)\in E(G)$ for all $n\in \mathbb{N}$ ([1]). It is proved in [1] that $E(G)$ is a quasi-order if the triple $(X,d,G)$ has property $(*)$.
By using a few geometrical notions and properties of $G$, in [1-7]
 the authors obtained the existence of a fixed point for certain type of mappings on $X$.  
Let $A, B$ be two subsets of $X$ and $T:A\cup B\to A\cup B$ be a cyclic map, i.e., $T(A)\subseteq B, T(B)\subseteq A.$ If $d(A,B) = \inf\{d(a,b):a\in A,b\in B\} >0$, then 
a best proximity point ($x\in A\cup B$ that satisfies $d(x,Tx)=d(A,B)$) serves as a global optima of the operator equation $Tx=x$.

   To address such a global optimization problem, in this article, we develop an approach that integrates the compatibilty between the structural properties of $G$ and the geometrical properties on $X.$ 
 More precisely, we introduce $G$-cyclic $(\phi_1, \phi_2)$-contraction mappings and therein prove the existence of a best proximity point, here $\phi_1,~\phi_2$ are such that $\phi_1$ is an increasing map and $\phi_2-I$ is a non-decreasing map for the identity map $I$. Our results ensures the uniqueness of such a point in every connected subgraph of $G.$ We also yield some results on the relation between the cardinality of best proximity points and the number of connected subgraphs in $G$. Further, we obtain common fixed point results for a similar class of mappings. Our results unify and subsume a few existing results in the literature. Finally, as an application, we obtain the existence of a common solution for a system of periodic boundary value problems. As a particular case, we get the existence of a unique solution for a periodic boundary value problem with Carath\'{e}odory function on right hand side.

 Now we fix a few notations and facts that will be used in the sequel. Throughout this paper  $I$ denotes the identity map on $[0, \infty).$ Let $(X,d)$ be a metric space and $G$ be a directed graph endowed on $X$ with the vertex set $V(G)=X$ and the edge set $E(G).$ Assume that every trivial loop is an edge of $G$ (i.e., $(x,x)\in E(G)~\mbox{for}~x\in X)$ and $G$ doesn't contain any parallel edge.
 In this case, we call such a triple $(X,d,G)$ as a metric space with a graph.  
 The edge set $E(G)$ is said to have quasi-order if $(x,y), (y,z)\in E(G)$ for $x, y, z\in X,$ then $(x,z)\in E(G)$ (\cite{Jachymski 2008}). For any vertices $x$ and $y$ in $G,$ a path from $x$ to $y$ is a finite sequence of vertices $x=x_0, x_1,...,x_n=y$ in $G$ with $(x_{i-1}, x_i)\in E(G)$ for $1\leq i\leq n$, for some $n\in \mathbb{N}.$
   Also the graph $G$ is said to be connected if for every pair of vertices $x$ and $y,$ there exists a path from $x$ to $y$ in $G$. The graph $G$ is known as weakly connected if the undirected graph $\tilde{G}$ is connected, where $V(\tilde{G})=V(G)$ and $E(\tilde{G})=E(G)\bigcup \{(y,x):(x,y)\in E(G)\}$). 
For any vertex $x$ in $G,~[x]_{\tilde{G}}=\{y\in V(G):~\mbox{there is a path from}~x~\mbox{to}~y$ in $\tilde{G}\}.$  
Further $A$ is said to have the property $(*)$ if for any sequence $\{x_n\}$ in $A$ with $\displaystyle \lim_{n\to \infty} x_n=x$ and $(x_n,x_{n+1})\in E(G),$ we have $(x_n,x)\in E(G)$ for all $n\in \mathbb{N}$ ([1]). It is proved in [1] that $E(G)$ is a quasi-order if the triple $(X,d,G)$ has property $(*)$.
For any two subsets $A, B$ of $X$, we set $A_0 =\{x\in A : d(x,y)=d(A,B) \mbox{ for some } y \in B\},~B_0 =\{v\in B : d(u,v)=d(A,B) \mbox{ for some } u \in A\}$ and for $(x, y)\in A\times B,~m(x,y)=\max \{d(x, Tx), d(y, Ty)\}.$ The pair $(A,B)$ is said to be sharp proximal  
 (\cite{Espinola 2008,Raju 2010}),
  if for any $x$ in $A$ (respetively in $B$) there exists a unique $y$ in $B$ (respetively in $A$) such that $d(x,y)=d(A,B)$. Also, we say that the pair $(A,B)$ is $G$-Chebyshev if every parallel pair is also an edge of $G$ (i.e., $\{(u,v)\in A\times B:d(u,v)=d(A,B)\} \subset E(G)$).
The pair $(A,B)$ is said to satisfy property UC (\cite{Suzuki 2009}) if for any sequence $\{x_n\}, \{u_n\}$ in $A$ and $\{y_n\}$ in $B$ such that $\displaystyle \lim_{n\to \infty} d(x_n, y_n)=d(A,B)=\lim_{n\to \infty} d(u_n, y_n),$ then $\displaystyle \lim_{n\to \infty} d(x_n,u_n)= 0.$  The following well known lemma is useful to obtain some of our main results.
   \begin{lemma}\label{Suzuki lemma}\cite{Suzuki 2009}
 Let $A$ and $B$ be two subsets of a metric space $(X,d).$ Assume that $(A,B)$ has property UC. Let $\{x_n\}$ and $\{y_n\}$ be two sequences in $A$ and $B$ respectively, such that either of the following two holds:
 $$\displaystyle \lim_{m\to \infty}\sup_{n\geq m}d(x_m,y_n)=d(A,B)~~~~\mbox{or}~~~~\displaystyle \lim_{n\to \infty}\sup_{m\geq n}d(x_m,y_n)=d(A,B).$$
 Then $\{x_n\}$ is Cauchy.
 \end{lemma}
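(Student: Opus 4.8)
The plan is to derive the Cauchy property of $\{x_n\}$ from a single application of property UC, arguing by contradiction. Write $d:=d(A,B)$. Suppose $\{x_n\}$ were not Cauchy; then I can fix an $\varepsilon>0$ together with index sequences $m_j<k_j$, both tending to $\infty$, such that $d(x_{m_j},x_{k_j})\ge\varepsilon$ for every $j$. The goal is to feed suitable subsequences of $\{x_n\}$ and $\{y_n\}$ into the definition of property UC and conclude $d(x_{m_j},x_{k_j})\to 0$, which is the desired contradiction.

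First suppose the first hypothesis holds, i.e.\ $s_m:=\sup_{n\ge m}d(x_m,y_n)\to d$ as $m\to\infty$. Since $m_j<k_j$, I have $d\le d(x_{m_j},y_{k_j})\le s_{m_j}$ and $d\le d(x_{k_j},y_{k_j})\le s_{k_j}$, so both $d(x_{m_j},y_{k_j})\to d$ and $d(x_{k_j},y_{k_j})\to d$ as $j\to\infty$. Applying property UC to the sequences $\{x_{m_j}\}_j,\{x_{k_j}\}_j\subseteq A$ and $\{y_{k_j}\}_j\subseteq B$ then gives $d(x_{m_j},x_{k_j})\to 0$, contradicting the choice of $\varepsilon$.

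If instead the second hypothesis holds, i.e.\ $t_n:=\sup_{m\ge n}d(x_m,y_n)\to d$ as $n\to\infty$, I use the index $m_j$ in the $y$-slot: from $m_j<k_j$ I get $d\le d(x_{m_j},y_{m_j})\le t_{m_j}$ and $d\le d(x_{k_j},y_{m_j})\le t_{m_j}$, so both distances tend to $d$, and property UC applied to $\{x_{m_j}\}_j,\{x_{k_j}\}_j\subseteq A$ and $\{y_{m_j}\}_j\subseteq B$ again yields $d(x_{m_j},x_{k_j})\to 0$, the same contradiction. In either case $\{x_n\}$ is forced to be Cauchy.

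I expect the only place needing care to be the selection of the index sequences: one must arrange $m_j<k_j$ so that the relevant distance is dominated by $s_{m_j}$ (resp.\ by $t_{m_j}$) while simultaneously keeping $m_j,k_j\to\infty$ so that these suprema genuinely converge to $d$; this is the standard extraction from the negation of the Cauchy condition, where for a fixed $\varepsilon$ one finds arbitrarily late pairs of $\varepsilon$-separated terms. Everything else is a direct unwinding of the definition of property UC.
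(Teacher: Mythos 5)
Your argument is correct: the extraction of $\varepsilon$-separated pairs $m_j<k_j$ with both indices tending to infinity, the squeeze $d(A,B)\le d(x_{m_j},y_{k_j})\le s_{m_j}$ (resp.\ $\le t_{m_j}$), and the single application of property UC to $\{x_{m_j}\}$, $\{x_{k_j}\}$ and $\{y_{k_j}\}$ (resp.\ $\{y_{m_j}\}$) together give the contradiction cleanly in both cases. The paper itself imports this lemma from Suzuki--Kikkawa--Vetro without proof, and your argument is essentially the standard one from that source, so there is nothing further to reconcile.
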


\section{$G$-cyclic contractions}
%

Let $A,B$ be two subsets of a metric space with a graph $(X,d,G)$ and $T$ be a cyclic map on $A\cup B.$ We say that $T$ (respectively, $T^2$) preserves edges on $A$ if $(Tu,Tv)$ (respectively, $(T^2u,T^2v)$) in an edge for every $u, v\in A$ with $(u, v)\in E(G).$

  \begin{definition}\label{map definition_pre}
Let $(A,B)$ be a $G$-Chebyshev pair of subsets of a metric space with a graph $(X,d,G)$. Suppose $\phi_1:[0,\infty)\to [0,\infty)$ is an increasing map and $\phi_2:[0,\infty)\to [0,\infty)$ is a map such that $\phi_2-I$ is non-decreasing, here $I$ is the identity map. A cyclic map $T$ on $A\cup B$ is said to be $G$-cyclic $(\phi_1, \phi_2)$-contraction
  if 
 \begin{itemize}
 \item[(i)] $T^2$ preserves the edges on $A;$
  \item[(ii)] $d(Tx,Ty)\leq (I-\phi_1)(d(x,y))+ (I-\phi_2)(m(x,y))+ (\phi_1+\phi_2-I)(d(A,B))$ for $\displaystyle (x,y)\in A\times B$ with $\{(x,y),(x,Ty),(Ty,x)\} \cap E(G) \neq \emptyset.$
 \end{itemize}
 \end{definition}
%
 It is easy to see that if $T$ is a $G$-cyclic $(\phi_1, \phi_2)$-contraction, then $T(A_0)\subseteq B_0, T(B_0)\subseteq A_0$.
 Also, every cyclic $\phi$-contraction (\cite{Shahzad 2009}) is a $G_0$-cyclic $(\phi, c+I)$-contraction, where $E(G_0)=X\times X$ and $c$ is a constant. Further, a cyclic contraction (\cite{Eldred 2006}) with a constant $\alpha \in (0,1)$ is a $G_0$-cyclic $(\phi, c+I)$-contraction, where $\phi (s)=(1-\alpha)s$ and $c$ is a constant.  However, a $G$-cyclic $(\phi, c+I)$-contraction is not necessarily a cyclic $\phi$-contraction.
  
 \begin{example}\label{not G-contraction 3}
 Consider the space $X=\{f=f_1+if_2: f_1,f_2\in \mathscr{C}\left([0,1]\right)\}$ with $\|f\|=\|f_1\|_{\infty}+\|f_2\|_{\infty}$ for $f=f_1+f_2\in X.$  Set
 $A=\{f_\alpha\in X: \alpha\in [0,1]\},~B=\{g_\alpha\in X: \alpha\in [0,1]\},$ where 
{\small{ 
 \[
 	f_{\alpha}(t)=
 		\begin{cases}
 		2i\alpha t   &\text{if}~ t\in \left[0,\frac{1}{2}\right],\\
 		2i\alpha(1-t) &\text{if}~ t\in \left[\frac{1}{2},1\right];
 		\end{cases}~~
 \\	
 	g_{\alpha}(t)=
 		\begin{cases}
 		1+\alpha(t-\frac{1}{2})+ 2i\alpha t   &\text{if}~ t\in \left[0,\frac{1}{2}\right],\\
 		1-\alpha(t-\frac{1}{2})+ 2i\alpha (1-t)   &\text{if}~ t\in \left[\frac{1}{2},1\right].
 		\end{cases}
 \]}}
 Fix $\alpha\in [0,1].$ Then $\|f_\alpha\|=\alpha,~\|g_\alpha\|=1+\alpha.$ For $t\in [0,\frac{1}{2}],~|f_\alpha(t)-g_\alpha(t)|=|2i\alpha t-(1+\alpha(t-\frac{1}{2})+ 2i\alpha t)|=|1+\alpha(t-\frac{1}{2})|$ and for $t\in [\frac{1}{2},1],~|f_\alpha(t)-g_\alpha(t)|=|2i\alpha(1-t)-(1-\alpha(t-\frac{1}{2})+ 2i\alpha (1-t))|=|1-\alpha(t-\frac{1}{2})|.$ Then $\|f_\alpha-g_\alpha\|=1.$
Now, for $\alpha, \beta\in [0,1]$ with $\alpha\neq \beta$ and $t\in [0,\frac{1}{2}],~|f_\alpha(t)-g_\beta(t)|=|2i\alpha t-(1+\beta(t-\frac{1}{2})+ 2i\beta t)|=|1+\beta(t-\frac{1}{2})|+2|\alpha-\beta|t$ and for $t\in [\frac{1}{2},1],~|f_\alpha(t)-g_\beta(t)|=|2i\alpha(1-t)-(1-\beta(t-\frac{1}{2})+ 2i\beta (1-t))|=|1-\beta(t-\frac{1}{2})|+2|\alpha-\beta|(1-t).$ Then $\|f_\alpha-g_\beta\|=1+|\alpha-\beta|>1$ for $\alpha\neq \beta.$

It's clear that for every $(f_\alpha, g_\beta)\in A\times B$ there exists a there exists a unique pair $(g_\alpha, f_\beta)\in B\times A$ such that $\|f_\alpha-g_\alpha\|=1=d(A,B)=\|g_\beta-f_\beta\|.$ Hence $(A,B)$ is a sharp proximal pair.
For any $0<z< 1,$ there exists a unique $n\in \mathbb{N}\cup \{0\}$ such that $\frac{1}{n+1}\leq z<\frac{1}{n}.$ Denote $n+1$ by $\kappa_z.$ Define $k_0=0$ and $k_1=1.$ Consider a graph $G$ on $X$ by $V(G)=X$ and $E(G)=\{(f_\alpha, g_\beta)\in A\cup B\times A\cup B: \alpha\geq \beta~\mbox{and}~ \kappa_{\alpha}=\kappa_{\beta}~\mbox{for}~\alpha, \beta\in (0,1)\}\cup \{(f_0, g_0), (f_1, g_1)\}.$
For $\alpha\in [0,1],$~define a map $T:A\cup B\to A\cup B$ by
 \[
 	Tf_\alpha=
 		\begin{cases}
 		g_{\frac{1}{\kappa_{\alpha}}}~&\text{if}~0< \|f_\alpha\|<1,\\
 		g_\alpha &\text{otherwise};
 		\end{cases}~
 	Tg_\alpha=	
 		\begin{cases}
 		f_{\frac{1}{\kappa_{\alpha}}}~&\text{if}~0< \|g_\alpha\|-1<1,\\
 		f_\alpha &\text{otherwise}.
 		\end{cases}
 \]
 Define $\phi:[0,\infty)\to [0,\infty)$ by $\phi(s)=\lfloor s\rfloor +\frac{\{s\}}{\kappa_{\{s\}}},$ where $\lfloor s\rfloor$ is the floor function and $\{s\}$ is the fractional part of $s.$ It is easy to verify that $\phi$ is an increasing function.
We first prove that $T$ is a $G$-cyclic $(\phi, c+I)$-contraction map, where $c$ is a constant. Certainly, $\|Tf_0-Tg_0\|=\|g_0-f_0\|=1$ and $\|Tf_1-Tg_1\|=\|g_1-f_1\|=1.$
Next, for $\alpha , \beta \in (0,1)$ and $t\in [0,\frac{1}{2}],$ 
 \begin{eqnarray*}
 \left|Tf_\alpha(t)-Tg_\beta(t)\right|
  &=&\left|g_{\frac{1}{\kappa_{\alpha}}}(t)-f_{\frac{1}{\kappa_{\beta}}}(t)\right|\\
&=& \left|\left(1+\frac{1}{\kappa_\alpha}\left(t-\frac{1}{2}\right)+2i\frac{1}{\kappa_\alpha}t\right)-2i\frac{1}{\kappa_\beta}t \right|\\
&=& \left|1+\frac{1}{\kappa_\alpha}\left(t-\frac{1}{2}\right) +2i\left(\frac{1}{\kappa_\alpha}-\frac{1}{\kappa_{\beta}}\right)t\right|.
 \end{eqnarray*}
 Similarly, $\left|Tf_\alpha(t)-Tg_\beta(t)\right|$=$\left|1-\frac{1}{\kappa_\alpha}\left(t-\frac{1}{2}\right) +2i\left(\frac{1}{\kappa_\alpha}-\frac{1}{\kappa_{\beta}}\right)(1-t)
\right|$ for $t \in [\frac{1}{2},1].$ 
 Thus, $\|Tf_\alpha-Tg_\beta\|=1+\left|\frac{1}{\kappa_\alpha}-\frac{1}{\kappa_\beta}\right|.$ Also,
 {\small{\begin{eqnarray*}
 \|f_\alpha-g_\beta\|-\phi\left(\|f_\alpha-g_\beta\|\right)+\phi(d(A,B))&= & 1+|\alpha-\beta|-\phi\left(1+|\alpha-\beta|\right)+\phi(1)\\
 &=& 1+|\alpha-\beta|-\left(1+\frac{|\alpha-\beta|}{\kappa_{|\alpha-\beta|}}\right)+1\\
 &=& 1+|\alpha-\beta|\left(1-\frac{1}{\kappa_{|\alpha-\beta|}}\right)
 \end{eqnarray*}}}
  As $(f_\alpha, g_\alpha)\in E(G),$ we have $\|Tf_\alpha-Tg_\alpha\|=1=\|f_\alpha-g_\alpha\|=\|f_\alpha-g_\alpha\|-\phi\left(\|f_\alpha-g_\alpha\|\right)+\phi(d(A,B)).$
 
  If $(f_\alpha, g_\beta)\in E(G)$ for $\alpha, \beta\in (0,1),~\alpha\neq \beta,$ we have $\kappa_\alpha=\kappa_\beta.$ Then $\|Tf_\alpha-Tg_\beta\|=1+\left|\frac{1}{\kappa_\alpha}-\frac{1}{\kappa_\beta}\right|=1\leq 1+|\alpha-\beta|-\left(1+\frac{|\alpha-\beta|}{\kappa_{|\alpha-\beta|}}\right)+1=\|f_\alpha-g_\beta\|-\phi\left(\|f_\alpha-g_\beta\|\right)+\phi(d(A,B)).$ 
Again, if $(Tg_\beta, f_\alpha)\in E(G)$ or $(f_\alpha, Tg_\beta)\in E(G)$ for $\alpha, \beta\in (0,1),~\alpha\neq \beta,$ we have $\alpha=\frac{1}{\kappa_\beta}.$ Then $\kappa_\alpha =\kappa_\beta$ and hence $\|Tf_\alpha-Tg_\beta\|\leq \|f_\alpha-g_\beta\|-\phi\left(\|f_\alpha-g_\beta\|\right)+\phi(d(A,B)).$

  Let $\alpha_0=0.49$ and $\beta_0=0.51$ so that $\kappa_{\|f_{\alpha_0}\|}=\kappa_{\alpha_0}=3$ and $\kappa_{\|g_{\beta_0}\|-1}=\kappa_{\beta_0}=2.$ We see that 
 $\|Tf_{\alpha_0}-Tg_{\beta_0}\|=1+\left|\frac{1}{\kappa_{\alpha_0}}-\frac{1}{\kappa_{\beta_0}} \right|=1+\left|\frac{1}{3}-\frac{1}{2}\right|=1+\frac{1}{6}> 1.02=1+|\alpha_0-\beta_0|=\|f_{\alpha_0}-g_{\beta_0}\|.$ Thus, $T$ is not a cyclic $\phi$-contraction for any increasing map $\phi.$
 \end{example}

 The set $X_{T^2}^A$ denotes the set of all vertices $x$ in $A$ for which $(x,T^2x)$ is an edge in $G.$
 The following approximation lemma will be useful in the sequel.
   \begin{lemma}\label{abps1}
  Let $(X,d,G)$ be a metric space with a graph and let $(A,B)$ be a pair of non-empty subsets of $X.$ Suppose $\phi_i:[0,\infty)\to [0,\infty), ~(1\leq i\leq 2),$ is such that $\phi_1$ increasing and $\phi_2-I$ is non-decreasing. Assume that $T:A\cup B\to A\cup B$ is a $G$-cyclic $(\phi_1, \phi_2)$-contraction. Then for any $x_0\in X_{T^2}^A,~d(x_{n+1}, x_n)\to d(A,B),$ where $x_{n+1}=Tx_n, n\geq 0$.
 \end{lemma}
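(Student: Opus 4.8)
The plan is to track the iterates $x_{n+1}=Tx_n$ starting from $x_0\in X_{T^2}^A$ and show the distances $d_n:=d(x_{n+1},x_n)$ decrease to $d(A,B)$. First I would set up the edge structure: since $(x_0,T^2x_0)\in E(G)$ and $T^2$ preserves edges on $A$, induction gives $(x_{2k},x_{2k+2})\in E(G)$ for all $k\ge 0$, i.e. consecutive even-indexed iterates in $A$ are edges. To apply the contraction inequality (ii), which requires a pair $(x,y)\in A\times B$ with $\{(x,y),(x,Ty),(Ty,x)\}\cap E(G)\neq\emptyset$, I would take $(x,y)=(x_{2k},x_{2k+1})$ (noting $x_{2k+1}=Tx_{2k}\in B$, and $T x_{2k+1}=x_{2k+2}$, so $(x_{2k},Tx_{2k+1})=(x_{2k},x_{2k+2})\in E(G)$, giving membership in the required set) and also $(x,y)=(x_{2k+2},x_{2k+1})$ (here $Tx_{2k+1}=x_{2k+2}$, so $(Tx_{2k+1},x_{2k+2})=(x_{2k+2},x_{2k+2})$ is a trivial loop, hence an edge — alternatively use $(x_{2k+2},x_{2k+2})$ directly). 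In each case the hypothesis of (ii) is met, so the contraction inequality applies along the whole orbit.

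Next I would extract a recursive inequality for $d_n$. Applying (ii) to $(x_{2k},x_{2k+1})$ gives
$$d_{2k+1}=d(Tx_{2k},Tx_{2k+1})\le (I-\phi_1)(d_{2k})+(I-\phi_2)(m(x_{2k},x_{2k+1}))+(\phi_1+\phi_2-I)(d(A,B)),$$
and since $m(x_{2k},x_{2k+1})=\max\{d_{2k},d_{2k+1}\}$, a short case analysis (using that $\phi_1$ is increasing and $\phi_2-I$ is non-decreasing, so $I-\phi_1$ and $I-\phi_2$ are "order-reversing" in the relevant sense) forces $d_{2k+1}\le d_{2k}$: indeed if $d_{2k+1}>d_{2k}$ then $m=d_{2k+1}$ and the inequality becomes $d_{2k+1}\le (I-\phi_1)(d_{2k})+(I-\phi_2)(d_{2k+1})+(\phi_1+\phi_2-I)(d(A,B))$, which after rearranging via monotonicity of $\phi_2-I$ and $\phi_1$ contradicts $d_{2k+1}>d_{2k}\ge d(A,B)$. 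The same argument with $(x_{2k+2},x_{2k+1})$ gives $d_{2k+2}\le d_{2k+1}$. Hence $\{d_n\}$ is non-increasing and bounded below by $d(A,B)$, so it converges to some limit $r\ge d(A,B)$.

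Finally I would show $r=d(A,B)$. Passing to the limit in the recursive inequality $d_{n+1}\le (I-\phi_1)(d_n)+(I-\phi_2)(\max\{d_n,d_{n+1}\})+(\phi_1+\phi_2-I)(d(A,B))$ — with $\max\{d_n,d_{n+1}\}=d_n$ for large $n$ since the sequence is non-increasing — and using whatever (semi)continuity or monotone-limit behavior of $\phi_1,\phi_2$ is available (from monotonicity alone one gets one-sided limits $\phi_1(r^+):=\lim_{s\downarrow r}\phi_1(s)$ and similarly for $\phi_2$, and the inequality survives the limit in the direction $d_n\downarrow r$), I would obtain $r\le r-\phi_1(r)+r-\phi_2(r)+\phi_1(d(A,B))+\phi_2(d(A,B))-d(A,B)$, i.e. $\phi_1(r)-\phi_1(d(A,B))+\phi_2(r)-\phi_2(d(A,B))\le r-d(A,B)\le \phi_1(r)-\phi_1(d(A,B)) + \big((\phi_2-I)(r)-(\phi_2-I)(d(A,B))\big) + \big(r-d(A,B)\big)$; careful bookkeeping shows this collapses to $\phi_1(r)\le\phi_1(d(A,B))$ together with $(\phi_2-I)(r)\le(\phi_2-I)(d(A,B))$, and since $\phi_1$ is strictly increasing (or: $\phi_1$ increasing suffices once combined with $r\ge d(A,B)$) this yields $r=d(A,B)$.

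The main obstacle I anticipate is the limiting step: the hypotheses only give monotonicity of $\phi_1$ and of $\phi_2-I$, not continuity, so one must be scrupulous about which one-sided limits are controlled and whether the terms $(I-\phi_1)(d_n)$ and $(I-\phi_2)(m(x_n,x_{n+1}))$ behave well as $d_n\downarrow r$; monotonicity guarantees the right-hand one-sided limits exist, and because $d_n$ approaches $r$ from above, $\phi_1(d_n)\downarrow\phi_1(r^+)\ge\phi_1(r)$ and similarly for $\phi_2-I$, which is exactly the direction needed to close the estimate — but getting the inequalities pointing the right way requires care. A secondary nuisance is verifying at each stage that the edge-condition in (ii) is genuinely satisfied (handled above by exploiting trivial loops and the $T^2$-edge-preservation), and handling the degenerate possibility that some $d_n=d(A,B)$ already, in which case the sequence is eventually constant and there is nothing to prove.
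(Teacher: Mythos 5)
Your proposal is correct and follows essentially the same route as the paper: establish the edge structure along the orbit via $T^2$-edge-preservation and trivial loops, show $d_n$ is non-increasing using $(\phi_2-I)(m(x_{n+1},x_n))\geq(\phi_2-I)(d_n)$, and force the limit $p$ to equal $d(A,B)$ from the monotonicity of $\phi_1+\phi_2-I$. The continuity worry you flag at the end is a non-issue: since $p\leq d_n$, monotonicity gives $(\phi_1+\phi_2-I)(p)\leq(\phi_1+\phi_2-I)(d_n)\leq d_n-d_{n+1}+(\phi_1+\phi_2-I)(d(A,B))$, and letting $n\to\infty$ uses only $d_n-d_{n+1}\to 0$, no one-sided limits of $\phi_1$ or $\phi_2$ — which is exactly how the paper closes the argument.
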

 
 \begin{proof}
 As $x_0 \in X^A_{T^2}$, we have $(x_{n},x_{n+2}) \in E(G)$ for even $n$ and $(x_n,x_n) \in E(G)$ for odd $n.$ For $n\geq 0,$ set $d_{n}= d(x_{n+1}, x_n).$
 It has to be observe that $(\phi_2-I)(m(x_{n+1}, x_n))\geq  (\phi_2-I)(d_{n})$ for $n\geq 0.$
%
Since $\phi_1$ is increasing and $\phi_2-I$ is non-decreasing, for $n\geq 0$ we have
\begin{eqnarray}\label{for lemma abps1}
d_{n+1}&= & d(x_{n+2},x_{n+1}) \nonumber\\ 
& \leq & (I-\phi_1) (d(x_{n+1},x_{n}))+(I-\phi_2)(m(d(x_{n+1},x_{n}))+(\phi_1+\phi_2-I)(d(A,B)) \nonumber\\
&\leq & (I-\phi_1) (d(x_{n+1},x_{n}))+(I-\phi_2)(d(x_{n+1},x_{n}))+(\phi_1+\phi_2-I)(d(A,B)) \nonumber \\
&= & d_n -\phi_1(d_n) -(\phi_2-I) (d_n)+\phi_1(d(A,B))+(\phi_2-I) (d(A,B))\\
&\leq & d_n. \nonumber
\end{eqnarray}
%
%
%
Hence $\{d_n\}$ is non-increasing with $d(A,B)\leq d_n\leq d_0.$ Suppose $\displaystyle \lim_{n\to \infty}d_n=p.$ Then $d(A,B)\leq p\leq d_n$ for $n\geq 0.$ From (\ref{for lemma abps1}), it is clear that $(\phi_1+\phi_2-I)(d_{n})\leq d_{n}-d_{n+1}+(\phi_1+\phi_2-I)(d(A,B))$ for $n\geq 0.$ Thus $(\phi_1+\phi_2-I)(d(A,B)) \leq  (\phi_1+\phi_2-I)(p) \leq (\phi_1+\phi_2-I)(d_{n}) \leq  d_{n}-d_{n+1}+(\phi_1+\phi_2-I)(d(A,B)).$ Letting $n\to \infty,~(\phi_1+\phi_2-I)(p) \leq (\phi_1+\phi_2-I)(d(A,B)).$
As $\phi_1+\phi_2-I$ is increasing, we must have $p=d(A,B).$ This completes the proof.
 \qed \end{proof}
 
 
\section{Existence of Best Proximity Points}

 Let $(X,d,G)$ be a metric space with a graph and let $A, B$ be two non-empty subsets of $X$. Suppose $\phi_i:[0,\infty)\to [0,\infty),~(1\leq i\leq 2),$ are maps such that $\phi_1$ is increasing and $\phi_2-I$ is non-decreasing. Suppose $T$ be a $G$-cyclic $(\phi_1, \phi_2)$-contraction on $A\cup B.$ We denote the set of all best proximity points of $T$ in $A$ by $BP\left(T\big|_A\right)$. 
 The proof of the following proposition is straightforward and hence left to the reader as an exercise. 
    \begin{proposition}\label{proposition1}
Let $A, B, T$ and $(X,d,G)$ be as above. Suppose that $(A,B)$ is having property UC. Then $BP\left(T\big|_A\right)\subseteq X_{T^2}^A.$ Moreover, $T^{2n}x=x$ for all $x\in BP\left(T\big|_A\right)$ and $n\in \mathbb{N}.$ 
 \end{proposition}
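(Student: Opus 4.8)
The plan is to fix an arbitrary $x\in BP\left(T\big|_A\right)$, that is, $x\in A$ with $d(x,Tx)=d(A,B)$, and to establish in turn that $d(Tx,T^2x)=d(A,B)$, then that $T^2x=x$, and finally that $(x,T^2x)\in E(G)$. Since $T$ is cyclic, $Tx\in B$ and $T^2x\in A$, so $(x,Tx)$ is a parallel pair lying in $A\times B$; because $(A,B)$ is $G$-Chebyshev, this forces $(x,Tx)\in E(G)$. This is the only place where the $G$-Chebyshev hypothesis enters, and it is exactly what allows the pair $(x,Tx)$ to be inserted into the contraction inequality.

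First I would apply Definition \ref{map definition_pre}(ii) to the pair $(x,y)=(x,Tx)\in A\times B$: the requirement $\{(x,y),(x,Ty),(Ty,x)\}\cap E(G)\neq\emptyset$ holds since $(x,Tx)\in E(G)$. Put $\delta=d(A,B)$ and $r=d(Tx,T^2x)$; because $r\geq\delta=d(x,Tx)$ one has $m(x,Tx)=\max\{d(x,Tx),d(Tx,T^2x)\}=r$, so the inequality reads $r\leq (I-\phi_1)(\delta)+(I-\phi_2)(r)+(\phi_1+\phi_2-I)(\delta)$. The $\phi_1$ contributions cancel and this reduces to $\phi_2(r)\leq\phi_2(\delta)$. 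Writing $\phi_2=(\phi_2-I)+I$ and using that $\phi_2-I$ is non-decreasing together with $r\geq\delta$ gives $0\leq (\phi_2-I)(r)-(\phi_2-I)(\delta)\leq \delta-r\leq 0$, whence $r=\delta$; that is, $d(Tx,T^2x)=d(A,B)$. (Note that only the monotonicity of $\phi_2-I$ is used here, exactly as in the proof of Lemma \ref{abps1}.)

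Next I would invoke property UC for the constant sequences $x_n\equiv x$ and $u_n\equiv T^2x$ in $A$ and $y_n\equiv Tx$ in $B$: since $\lim_{n\to\infty}d(x_n,y_n)=d(x,Tx)=d(A,B)=d(T^2x,Tx)=\lim_{n\to\infty}d(u_n,y_n)$, property UC yields $\lim_{n\to\infty}d(x_n,u_n)=d(x,T^2x)=0$, i.e. $T^2x=x$. Consequently $(x,T^2x)=(x,x)\in E(G)$, since every trivial loop is an edge, so $x\in X_{T^2}^A$; as $x$ was arbitrary, $BP\left(T\big|_A\right)\subseteq X_{T^2}^A$. A trivial induction on $n$ then gives $T^{2n}x=x$ for all $x\in BP\left(T\big|_A\right)$ and $n\in\mathbb{N}$.

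There is no real obstacle: as the authors remark, the statement is essentially bookkeeping. The two places that call for a little care are checking the edge hypothesis of (ii) before applying it (supplied by $G$-Chebyshev) and keeping track of $\phi_2-I$ rather than $\phi_2$ through the cancellation. It is also worth noting that the equality $T^2x=x$ already contains the inclusion $BP\left(T\big|_A\right)\subseteq X_{T^2}^A$, so proving the ``moreover'' assertion settles the first one as well.
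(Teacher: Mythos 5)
Your proof is correct and complete; the paper itself omits the argument (``left to the reader as an exercise''), and what you give is precisely the intended bookkeeping: $G$-Chebyshevness puts $(x,Tx)$ into $E(G)$ so the contraction inequality applies, the $\phi_1$ terms cancel and monotonicity of $\phi_2-I$ forces $d(Tx,T^2x)=d(A,B)$, and property UC applied to constant sequences gives $T^2x=x$, whence $(x,T^2x)=(x,x)\in E(G)$ and the iteration $T^{2n}x=x$ follows by induction.
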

   
 Proposition \ref{proposition Cauchy} will be instrumental in proving some of our main results. 

  \begin{proposition}\label{proposition Cauchy}
Let $A, B, T$ and $(X,d,G)$ be as in Lemma \ref{abps1}. Suppose $(A,B)$ has property UC and $A$ has property $(*)$. Then $\{x_{2n}\}$ is a Cauchy sequence in $A$ for any $x_0\in X_{T^2}^A$ and $x_{n+1}=Tx_n, n\geq 0$.
 \end{proposition}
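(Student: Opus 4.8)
The plan is to show that the even-indexed subsequence $\{x_{2n}\}$ satisfies one of the hypotheses of Lemma \ref{Suzuki lemma} (the Suzuki lemma), so that the property UC of $(A,B)$ forces it to be Cauchy. Concretely, I would aim to prove that
\[
\lim_{n\to\infty}\sup_{m\geq n}d(x_{2m},x_{2n+1})=d(A,B).
\]
Here $\{x_{2m}\}\subseteq A$ and $\{x_{2n+1}\}\subseteq B$, so once this limsup condition is established, Lemma \ref{Suzuki lemma} yields that $\{x_{2n}\}$ is Cauchy in $A$.

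To reach the limsup condition, first I would record what the orbit structure gives us. Since $x_0\in X_{T^2}^A$, we have $(x_{2k},x_{2k+2})\in E(G)$ for every $k\geq0$, and by Lemma \ref{abps1} we already know $d_n:=d(x_{n+1},x_n)\to d(A,B)$, with $\{d_n\}$ non-increasing. The key step is a quasi-order / chaining argument: property $(*)$ for $A$, applied to the sequence $\{x_{2k}\}$ (which converges along subsequences or whose tail behaviour we control via $d_n\to d(A,B)$ combined with property UC to get genuine convergence of $\{x_{2n}\}$ — but that is circular, so instead) I would argue purely combinatorially that the edges $(x_{2k},x_{2k+2})$ chain up via the quasi-order on $E(G)$ — recall $E(G)$ is a quasi-order whenever $A$ has property $(*)$ — to give $(x_{2m},x_{2n})\in E(G)$ for $m\leq n$ of the same parity, or more usefully $(x_{2n},x_{2m+1})$-type edges. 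With such an edge in hand, the contraction inequality (ii) in Definition \ref{map definition_pre} applies to the pair $(x_{2m},x_{2n+1})\in A\times B$ (or with the appropriate indices), giving
\[
d(x_{2m+1},x_{2n+2})\leq (I-\phi_1)(d(x_{2m},x_{2n+1}))+(I-\phi_2)(m(x_{2m},x_{2n+1}))+(\phi_1+\phi_2-I)(d(A,B)).
\]
Since $m(x_{2m},x_{2n+1})=\max\{d_{2m},d_{2n+1}\}$ and these both converge down to $d(A,B)$, the terms involving $\phi_1,\phi_2,m$ can be controlled; iterating this inequality and bounding $\sup_{m\geq n}d(x_{2m},x_{2n+1})$ against a quantity that shrinks to $d(A,B)$ is the heart of the computation.

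The main obstacle, and the place where care is genuinely required, is establishing that the contraction inequality can be \emph{repeatedly} applied along the required pairs — i.e.\ verifying that the edge-condition $\{(x,y),(x,Ty),(Ty,x)\}\cap E(G)\neq\emptyset$ holds for the pairs $(x,y)=(x_{2m},x_{2n+1})$ we want to plug in. This is exactly where property $(*)$ and the quasi-order of $E(G)$ do the work: property $(*)$ upgrades the ``local'' edges $(x_{2k},x_{2k+2})$ coming from $x_0\in X_{T^2}^A$ into ``global'' edges between far-apart even terms, and then $T^2$ preserving edges on $A$ propagates these. Once the edge bookkeeping is settled, the analytic estimate reduces to: from $d_n\to d(A,B)$ and the recursive inequality above, deduce $\lim_{n}\sup_{m\geq n}d(x_{2m},x_{2n+1})=d(A,B)$ — a standard $\varepsilon$-argument using that $\phi_1+\phi_2-I$ is increasing (hence the ``penalty'' terms are nonnegative and force the $\limsup$ down). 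I would then invoke Lemma \ref{Suzuki lemma} to conclude.
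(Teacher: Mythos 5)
Your overall strategy is exactly the paper's: establish $\lim_{n\to\infty}\sup_{p\geq n}d(x_{2p},x_{2n+1})=d(A,B)$ and invoke Lemma \ref{Suzuki lemma}, with the edge bookkeeping handled by property $(*)$ (hence quasi-order of $E(G)$), which chains the edges $(x_{2k},x_{2k+2})$ into $(x_{2n+2},x_{2p})\in E(G)$ for $p>n$ so that the contraction inequality applies to the pair $(x_{2p},x_{2n+1})$ via $(Tx_{2n+1},x_{2p})\in E(G)$. That part of your outline is sound and matches the paper.

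The genuine gap is at the step you yourself flag as ``the heart of the computation'': you never actually prove the $\limsup$ condition, and the mechanism you propose for it --- iterating the inequality $d(x_{2p+1},x_{2n+2})\leq(I-\phi_1)(d(x_{2p},x_{2n+1}))+\cdots$ --- does not obviously close. That inequality shifts \emph{both} indices forward by one, so iteration moves you along a diagonal $d(x_{2p+k},x_{2n+1+k})$ rather than controlling $\sup_{p\geq n}d(x_{2p},x_{2n+1})$ for fixed $n$, and the subtracted terms $\phi_1(\cdot)-\phi_1(d(A,B))$ do not telescope to anything summable without a rate assumption on $\phi_1$ (only monotonicity is available). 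The paper instead argues by contradiction with a \emph{minimal} bad index: if the assertion fails for some $\epsilon_0$, choose $p_j>n_j\geq j$ with $d(x_{2p_j},x_{2n_j+1})\geq d(A,B)+\epsilon_0$ but $d(x_{2p_j-2},x_{2n_j+1})<d(A,B)+\epsilon_0$; since $d(x_{2p_j},x_{2p_j-2})\to 0$ (property UC applied to $d(x_{2n},x_{2n+1})\to d(A,B)$ and $d(x_{2n+2},x_{2n+1})\to d(A,B)$), this forces $d(x_{2p_j},x_{2n_j+1})\to d(A,B)+\epsilon_0$ exactly. A single application of the contraction inequality, sandwiched between triangle inequalities whose error terms vanish, then yields $\phi_1(d(x_{2p_j},x_{2n_j+1}))+(\phi_2-I)(m(x_{2p_j},x_{2n_j+1}))\to(\phi_1+\phi_2-I)(d(A,B))$, whence $\phi_1(d(A,B)+\epsilon_0)\leq\phi_1(d(A,B))$, contradicting that $\phi_1$ is increasing. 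Without this (or an equivalent) device, your proposal does not yet constitute a proof.
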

 
 \begin{proof}
 Fix $x_0\in X^{A}_{T^2}.$ By Lemma \ref{abps1}, we have $$d(x_{2n}, x_{2n+1})\to d(A,B)~\mbox{and}~d(x_{2n+1}, x_{2n+2})\to d(A,B).$$ Hence by property UC, $d(x_{2n}, x_{2n+2})\to 0.$\\

{\it{Assertion:}} For each $\epsilon>0,$ there exists $N\in \mathbb{N}$ such that 
 \begin{eqnarray*}\label{to be proved}
 \mbox{for}~ p>n\geq N,~ d(x_{2p}, x_{2n+1})<d(A,B)+\epsilon.
 \end{eqnarray*}

  {\it{Proof of the assertion:}} Assume the contratry. Then there exists $\epsilon_0>0$ such that for each $j\geq 1,$ there exist $p_j> n_j\geq j$ such that 
  \begin{eqnarray*}
  d(x_{2p_j}, x_{2n_j+1})&\geq & d(A,B)+\epsilon_0\\ \mbox{and}~~
  d(x_{2(p_j-1)}, x_{2n_j+1})&< & d(A,B)+\epsilon_0.
  \end{eqnarray*}
 Then 
 \begin{eqnarray*}
 d(A,B)+\epsilon_0 &\leq & d(x_{2p_j}, x_{2n_j+1})\\
 &\leq & d(x_{2p_j}, x_{2p_j-2})+d(x_{2p_j-2}, x_{2n_j+1})\\
 &<& d(x_{2p_j}, x_{2p_j-2}) + (d(A,B)+\epsilon_0).
 \end{eqnarray*}  
 As $j\to \infty,$ we have $\displaystyle\lim_{j\to \infty} d(x_{2p_j}, x_{2n_j+1})\to d(A,B)+\epsilon_0.$ Since $A$ has property $(*),$
 $(x_{2n_j+2}, x_{2p_j})\in E(G), (x_{2n_j+2}, x_{2p_j+2})\in E(G),$ we have 
 \begin{eqnarray*}
 d(x_{2p_j}, x_{2n_j+1}) &\leq & d(x_{2p_j}, x_{2p_j+2})+d(x_{2p_j+2}, x_{2n_j+3})+d(x_{2n_j+3}, x_{2n_j+1})\\
 &\leq & d(x_{2p_j}, x_{2p_j+2})+ d(x_{2p_j+1}, x_{2n_j+2})+d(x_{2n_j+3}, x_{2n_j+1})\\
 &\leq & d(x_{2p_j}, x_{2p_j+2})+[(I-\phi_1)(d(x_{2p_j},  x_{2n_j+1}))+(I-\phi_2)(m(x_{2p_j}, x_{2n_j+1})\\ 
 && +(\phi_1+\phi_2-I)(d(A,B))]+d(x_{2n_j+3}, x_{2n_j+1}).\\
 &\leq & d(x_{2p_j}, x_{2p_j+2})+ d(x_{2p_j},  x_{2n_j+1})+ d(x_{2n_j+3}, x_{2n_j+1}).
 \end{eqnarray*} 
 Letting $j\to \infty,$ we obtain
 \begin{eqnarray*}
 d(A,B)+\epsilon_0 &\leq & d(A,B)+\epsilon_0 -\displaystyle \lim_{j\to \infty}[\phi_1(d(x_{2p_j}, x_{2n_j+1}))- (I-\phi_2)(m(x_{2p_j}, x_{2n_j+1}))]\\
 && +(\phi_1+\phi_2-I)(d(A,B)) ~~\leq d(A,B)+\epsilon_0.
 \end{eqnarray*}  
 Thus, $\displaystyle \lim_{j\to \infty}(\phi_1(d(x_{2p_j}, x_{2n_j+1}))+ (\phi_2-I)(m(x_{2p_j}, x_{2n_j+1})))= (\phi_1+\phi_2-I)(d(A,B)).$
  Using the fact that $\phi_1$ is increasing and $\phi_2-I$ is non-decreasing, we see that
 \begin{eqnarray*}
 \phi_1(d(A,B)+\epsilon_0)+(\phi_2-I)(d(A,B)) &\leq & \displaystyle \lim_{j\to \infty} [\phi_1(d(x_{2p_j}, x_{2n_j+1}))+(\phi_2-I)(m(x_{2p_j}, x_{2n_j+1}))]\\ &= & (\phi_1+\phi_2-I)(d(A,B)),
 \end{eqnarray*}
%
 which is absurd. This proves the assertion. Now by Lemma \ref{Suzuki lemma}, we can conclude that $\{x_{2n}\}$ is a Cauchy sequence.
 \qed \end{proof}
  
 Two sequences $\{x_n\}$ and $\{y_n\}$ in $A$ are known as Cauchy equivalent (\cite{Jachymski 2008}) if each of them is Cauchy and $d(x_n,y_n)\to 0$.
  The following theorem characterizes 
 $BP\left(T\big|_A\right)$ in light of the connectedness of $G$.
 \begin{theorem}\label{equivalence theorem}
 Let $(X,d,G)$ and $(A,B)$ be as in Proposition \ref{proposition Cauchy}. Suppose $(A,B)$ is a sharp proximal pair. Then the following are equivalent:
 \begin{itemize}
 \item[(a)] $G$ in A is weakly connected;
 \item[(b)] For any $G$-cyclic $(\phi_1, \phi_2)$-contraction $T$ on $A\cup B$, the sequences $\{T^{2n}x\}$ and $\{T^{2n}y\}$ are Cauchy equivalent for $x, y\in A$;
 \item[(c)] For any $G$-cyclic $(\phi_1, \phi_2)$-contraction $T$ on $A\cup B$, the cardinality of $BP\left(T\big|_A\right)\leq 1.$
 \end{itemize}
 \end{theorem}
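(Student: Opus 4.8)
The plan is to establish the cyclic chain of implications $(a)\Rightarrow(b)\Rightarrow(c)\Rightarrow(a)$. For $(a)\Rightarrow(b)$, fix $x,y\in A$ and a $G$-cyclic $(\phi_1,\phi_2)$-contraction $T$. Since $\tilde G$ restricted to $A$ is connected, there is a path $x=z_0,z_1,\dots,z_k=y$ in $\tilde G$ with each $z_i\in A$. The idea is to interpolate along this path: it suffices to show that for each consecutive pair $(z_{i-1},z_i)$ (which is an edge in $\tilde G$, hence $(z_{i-1},z_i)\in E(G)$ or $(z_i,z_{i-1})\in E(G)$) the sequences $\{T^{2n}z_{i-1}\}$ and $\{T^{2n}z_i\}$ are each Cauchy and satisfy $d(T^{2n}z_{i-1},T^{2n}z_i)\to 0$; then one chains these together using the triangle inequality and the fact that a finite sum of quantities tending to $0$ tends to $0$. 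To see that each $\{T^{2n}z_i\}$ is Cauchy one first needs $z_i\in X_{T^2}^A$: if $(z_{i-1},z_i)\in E(G)$ then since $T^2$ preserves edges on $A$ we get $(T^2z_{i-1},T^2z_i)\in E(G)$, and more to the point one should check (as is implicitly done throughout Section 2) that having a path in $\tilde G$ gives, after possibly one application of $T$ or $T^2$ and using the $G$-Chebyshev/quasi-order structure coming from property $(*)$, that iterates land in $X_{T^2}^A$; then Proposition \ref{proposition Cauchy} applies. For the convergence $d(T^{2n}z_{i-1},T^{2n}z_i)\to 0$, apply the contraction inequality (ii) repeatedly to the pair $(z_{i-1},z_i)$ (or its image under $T$, so that the edge lies in the correct product $A\times B$ and the hypothesis $\{(x,y),(x,Ty),(Ty,x)\}\cap E(G)\neq\emptyset$ is met), obtaining
\begin{equation*}
d(T^{2n}z_{i-1},T^{2n}z_i)\le (I-\phi_1)^{2n}\bigl(d(z_{i-1},z_i)\bigr)+(\text{terms involving }m)\,,
\end{equation*}
and then combine with Lemma \ref{abps1} (which forces the $m$-terms down to the fixed point of $\phi_1+\phi_2-I$, namely $d(A,B)$) and monotonicity of $\phi_1,\phi_2-I$ to squeeze the right-hand side to $0$; here the $G$-Chebyshev hypothesis together with sharp proximality is what lets us transfer a bound of the form $d(\cdot,\cdot)\to d(A,B)$ between $A$ and $B$ into $d(\cdot,\cdot)\to 0$ on $A$ via property UC.

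For $(b)\Rightarrow(c)$, suppose $u,v\in BP(T\big|_A)$. By Proposition \ref{proposition1}, $T^{2n}u=u$ and $T^{2n}v=v$ for all $n$, so $\{T^{2n}u\}$ and $\{T^{2n}v\}$ are the constant sequences $u$ and $v$; by $(b)$ they are Cauchy equivalent, which forces $d(u,v)=\lim_n d(T^{2n}u,T^{2n}v)=0$, i.e. $u=v$. Hence $\lvert BP(T\big|_A)\rvert\le 1$.

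For the contrapositive of $(c)\Rightarrow(a)$, assume $\tilde G$ in $A$ is not weakly connected; then $A$ splits as a disjoint union $A=A_1\sqcup A_2$ of two nonempty sets with no $\tilde G$-edge between them, and correspondingly (using sharp proximality to match each point of $A$ with its unique closest point of $B$, and the $G$-Chebyshev property to see edges are respected) $B$ splits as $B=B_1\sqcup B_2$. The plan is to build a $G$-cyclic $(\phi_1,\phi_2)$-contraction $T$ that acts \emph{independently} on $A_1\cup B_1$ and on $A_2\cup B_2$, each piece carrying its own best proximity point — for instance take $T$ on each piece to be a genuine cyclic contraction with constant $\tfrac12$ toward a chosen proximal pair, which as noted after Definition \ref{map definition_pre} is a $G_0$-cyclic $(\phi,c+I)$-contraction and hence a $G$-cyclic $(\phi_1,\phi_2)$-contraction here since $E(G)$ contains all trivial loops and all parallel pairs. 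Because no edge connects the two pieces, the contraction condition (ii) is only ever tested within a single piece, so $T$ is globally a $G$-cyclic $(\phi_1,\phi_2)$-contraction; and each piece contributes a best proximity point, giving $\lvert BP(T\big|_A)\rvert\ge 2$.

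I expect the main obstacle to be the first implication $(a)\Rightarrow(b)$, specifically the bookkeeping needed to push a $\tilde G$-path (whose edges may point ``the wrong way'') through the one-directional contraction hypothesis (ii): one must carefully insert applications of $T$ so that each tested pair sits in $A\times B$ and meets the edge condition $\{(x,y),(x,Ty),(Ty,x)\}\cap E(G)\neq\emptyset$, and then verify that the edge-preservation of $T^2$ propagates the relevant edges along the whole iteration. Everything else — Lemma \ref{abps1}, Proposition \ref{proposition Cauchy}, property UC, and Lemma \ref{Suzuki lemma} — is available off the shelf and should make the convergence estimates routine once the path has been correctly set up.
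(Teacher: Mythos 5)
Your overall architecture coincides with the paper's: the cycle $(a)\Rightarrow(b)\Rightarrow(c)\Rightarrow(a)$, with $(b)\Rightarrow(c)$ argued exactly as in the paper via Proposition \ref{proposition1}, and $(c)\Rightarrow(a)$ via the same two-piece construction (the paper takes $T$ to be \emph{constant} on each piece, sending $[p_1]_{\tilde{G}}\cap A$ to $q_1$, its complement to $q_2$, and symmetrically on $B$; this is a $G$-cyclic $(\phi,k+I)$-contraction with $\phi(s)=\tfrac{1}{4}s$ and has $BP\left(T\big|_A\right)=\{p_1,p_2\}$, so you should make your ``cyclic contraction toward a proximal pair'' this concrete rather than leaving it generic, since no existence theorem for best proximity points is yet available at this stage of the paper).

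The one step that would fail as written is the displayed estimate in $(a)\Rightarrow(b)$: a bound of the form $d(T^{2n}z_{i-1},T^{2n}z_i)\le (I-\phi_1)^{2n}\bigl(d(z_{i-1},z_i)\bigr)+\cdots$ cannot be iterated, because $I-\phi_1$ is not assumed monotone, and even if it could be, the quantity being controlled tends to $d(A,B)$, not to $0$, so nothing on that side can be ``squeezed to $0$.'' The paper instead sets $d'_n=d(T^{n+1}x,T^ny)$, shows by the same monotone argument as in Lemma \ref{abps1} that $d'_n$ decreases to $d(A,B)$, and only then deduces $d(T^{2n}x,T^{2n}y)\to 0$ by applying property UC to the two sequences $\{T^{2n+2}x\}$ and $\{T^{2n+2}y\}$ in $A$ against the common sequence $\{T^{2n+1}y\}$ in $B$. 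You do name UC as the transfer mechanism at the end of your paragraph, so the idea is present, but the displayed inequality is not the route and must be replaced by the monotone-limit-plus-UC argument. Finally, the paper does not interpolate along a $\tilde{G}$-path at all: it uses property $(*)$ (which makes $E(G)$ a quasi-order) to assert $(x,y)$, $(T^{2n}x,T^{2n}y)$, $(T^{2n}y,T^{2n+2}x)\in E(G)$ directly for $x,y\in A$, which is what licenses applying condition (ii) to the pair $(T^{n+1}x,T^ny)$ at every step; your path-interpolation is a workable alternative in principle, but the claim that each intermediate vertex lands in $X_{T^2}^A$ is exactly the bookkeeping you leave unproved, so as submitted the first implication is not complete.
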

 
 \begin{proof}
 $(a)\Rightarrow (b):$ Let $T:A\cup B\to A\cup B$ be a $G$-cyclic $(\phi_1, \phi_2)$-contraction and $x, y\in A$. Assume $G$ in $A$ is weakly connected. For any $n\geq 0,$ define $d'_n=d(T^{n+1}x,T^ny).$ Since $A$ has property $(*)$, we have $(x,y),$ $(T^{2n}x, T^{2n}y),$ $(T^{2n}y, T^{2n+2}x)\in E(G)$ for $n\geq 0.$ Then  
 \begin{eqnarray}\label{equivalence number}
 d'_{n+1}& = & d(T^{n+2}x,T^{n+1}y)\nonumber\\ 
 &=& d(TT^{n+1}x,TT^{n}y)\nonumber\\
  &\leq & (I-\phi_1)(d(T^{n+1}x,T^{n}y))+ (I-\phi_2)(m(T^{n+1}x,T^{n}y))\nonumber\\ &+& (\phi_1+\phi_2-I)(d(A,B))\\ 
 &\leq &(I-\phi_1)(d'_{n})+(I-\phi_2)(d(A,B))+(\phi_1+\phi_2-I)(d(A,B))\leq d'_n.\nonumber
 \end{eqnarray}
 Therefore, $\{d'_n\}$ is non-increasing and $d(A,B)\leq d'_n\leq d(y,Tx)$ for $n\geq 0.$ Suppose $\displaystyle \lim_{n\to \infty}d'_n=p.$ Then $d(A,B)\leq p\leq d'_n$ for $n\geq 0.$ Using (\ref{equivalence number}), we have
 \begin{eqnarray*}
 (\phi_1+\phi_2-I)(d(A,B))&\leq & (\phi_1)(p)+(\phi_2-I)(d(A,B))\\ &\leq & (\phi_1)(d'_n)+(\phi_2-I)(m(T^{n+1}x,T^{n}y))\\ & \leq & d'_n-d'_{n+1}+(\phi_1+\phi_2-I)(d(A,B)).
 \end{eqnarray*}
 Thus $\phi_1(p)= \phi_1(d(A,B))$.
Since $\phi_1$ is increasing, we obtain $p=d(A,B).$ From Proposition \ref{proposition Cauchy} it is clear that $\{T^{2n}x\}$ and $\{T^{2n}y\}$ are Cauchy sequences. By using the above technique and Lemma \ref{abps1}, we have $d'_{2n+1}=d(T^{2n+2}x,T^{2n+1}y)\to d(A,B)$ and $d(T^{2n+2}y,T^{2n+1}y) \to d(A,B)$ and $n\to \infty$. Using property UC, we get
   $\{T^{2n}x\}$ and $\{T^{2n}y\}$ are Cauchy equivalent.
   
 $(b)\Rightarrow (c):$ Let $x, y\in BP\left(T\big|_A\right).$ Then $x, y\in X_{T^2}^A$. By $(b),~d(T^{2n}x,T^{2n}y)\to 0$ as $n\to \infty.$ Proposition \ref{proposition1} ensures that $T^{2n}x=x$ and $T^{2n}y=y$ for $n\geq 0.$ Hence $x=y.$
 
 $(c)\Rightarrow (a):$ On the contrary, assume that $G$ in $A$ is not weakly connected. Let $p_1\in A,$ then certainly $[p_1]_{\tilde{G}}\cap A$ and $A\setminus [p_1]_{\tilde{G}}$ are non-empty disjoint sets. Since $(A,B)$ is sharp proximal, there exists $q_1\in B$ such that $d(p_1,q_1)=d(A,B).$ Also for $p_2\in A\setminus [p_1]_{\tilde{G}},$ there exists $q_2\in B$ such that $d(p_2,q_2)=d(A,B).$ Observe that $p_1\in [q_1]_{\tilde{G}}, q_2\in [p_2]_{\tilde{G}}$ and $p_2\notin [p_1]_{\tilde{G}}$. Hence $q_2\notin [p_1]_{\tilde{G}}=[q_1]_{\tilde{G}}.$  Define a map $T:A\cup B\to A\cup B$ by 
 \[
 T(x)=
 	\begin{cases}
 	q_1 & \text{for}~ x\in [p_1]_{\tilde{G}}\cap A;\\
 	q_2 & \text{for}~ x\in A\setminus [p_1]_{\tilde{G}}
 	\end{cases};~~
 T(y)=
 	\begin{cases}
 	p_1 & \text{for}~ y\in [q_1]_{\tilde{G}}\cap B;\\
 	p_2 & \text{for}~ y\in B\setminus [q_1]_{\tilde{G}}.
 	\end{cases}
 \] 
 Notice that for a constant $k,~T$ is a $G$-cyclic $(\phi,k+I)$-contraction on $A\cup B$ with $\phi (s)=\frac{1}{4}s$ for all $s\geq 0.$ Here $BP\left(T\big|_A\right)=\{p_1, p_2\}.$ This disobeys $(c)$.
 \qed \end{proof}
 
 \begin{remark}
In the above theorem, it is to be observed that, sharp proximality on $(A,B)$ has been used only to prove $(c)\Rightarrow (a)$ and can be relaxed in proving $(a)\Rightarrow (b)$ and $(b)\Rightarrow (c)$.
 \end{remark}

  The following theorem ensures the existence of a best proximity point for $T.$

 \begin{theorem}\label{theorem for subsequence}
Let $A, B, T$ and $(X,d,G)$ be as in Lemma \ref{abps1}. Suppose $x_0\in X_{T^2}^A$ and $x_{n+1}=Tx_n, n\geq 0.$ If $A$ has property $(*)$ and $\{x_{2n}\}$ has a convergent subsequence in $A,$ then $T$ has a best proximity point.  
 \end{theorem}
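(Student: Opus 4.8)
The plan is to extract the convergent subsequence, identify its limit as a best proximity point using property $(*)$, the $G$-cyclic contraction inequality, and the sharpness-type information available through property UC. Suppose $\{x_{2n_k}\}$ converges to some $\xi \in A$. Since $x_0 \in X_{T^2}^A$, we know $(x_{2n}, x_{2n+2}) \in E(G)$ for all $n$, so in particular $(x_{2n_k}, x_{2n_k+2}) \in E(G)$; by property $(*)$ applied to the sequence $\{x_{2n_k}\}$ we get $(x_{2n_k}, \xi) \in E(G)$ for all $k$. Because $T$ is cyclic, $Tx_{2n_k} = x_{2n_k+1} \in B$, and $\{Tx_{2n_k}\}$ together with $\{x_{2n_k+1}\}$ feeds into the UC machinery. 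First I would show $d(\xi, T\xi) = d(A,B)$; the natural route is to estimate $d(T\xi, x_{2n_k+1}) = d(T\xi, Tx_{2n_k})$ using Definition~\ref{map definition_pre}(ii), which is applicable since $(x_{2n_k}, \xi) \in E(G)$ gives $(\xi, x_{2n_k}) \in E(\tilde G)$ — but the inequality in (ii) is stated for $(x,y) \in A \times B$ with an edge condition, so I must be careful about orientation and apply it in the form $d(Tx, Ty)$ with $x = x_{2n_k} \in A$ (or $x_{2n_k+1} \in B$) and the other argument chosen appropriately so the hypothesis $\{(x,y),(x,Ty),(Ty,x)\}\cap E(G)\neq\emptyset$ is met.

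The concrete execution: by Lemma~\ref{abps1}, $d(x_{2n_k}, x_{2n_k+1}) \to d(A,B)$. Using (ii) with the pair $(x_{2n_k}, T\xi) \in A \times B$ — valid because $(x_{2n_k}, \xi) = (x_{2n_k}, T(T\xi)$-type edge or, more safely, because $x_{2n_k} = T x_{2n_k-1}$ and one tracks the edge from $T^2$ preserving edges — one bounds
\[
d(x_{2n_k+1}, T\xi) \le (I-\phi_1)(d(x_{2n_k}, \xi)) + (I-\phi_2)(m(x_{2n_k}, \xi)) + (\phi_1+\phi_2-I)(d(A,B)).
\]
As $k \to \infty$, $d(x_{2n_k}, \xi) \to 0$ and $m(x_{2n_k}, \xi) = \max\{d(x_{2n_k}, x_{2n_k+1}), d(\xi, T\xi)\} \to \max\{d(A,B), d(\xi,T\xi)\}$. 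Since $\phi_1(0) = 0$ and $(I-\phi_1)(0) = 0$, and using that $\phi_2 - I$ is non-decreasing, the right-hand side converges to a quantity $\le d(\xi, T\xi)$ (with equality forcing $d(\xi,T\xi) = d(A,B)$ by the standard argument: if $d(\xi,T\xi) > d(A,B)$ then $(I-\phi_2)(d(\xi,T\xi)) + (\phi_1+\phi_2-I)(d(A,B)) < d(\xi,T\xi)$ strictly because $\phi_1$ is increasing, a contradiction). Meanwhile $d(x_{2n_k+1}, T\xi) \to d(\xi, T\xi)$ would need $x_{2n_k+1} \to \xi$, which is false in general; instead one should combine with $d(x_{2n_k}, x_{2n_k+1}) \to d(A,B)$ and property UC, or simply take the liminf of $d(x_{2n_k+1},T\xi)$ and note it is $\ge d(A,B)$, while the limsup of the right side is $\le d(\xi,T\xi)$, and a separate estimate pins $d(\xi,T\xi)$ down.

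The cleanest argument, and the one I expect the authors use: apply (ii) to get $d(x_{2n_k+2}, x_{2n_k+1})$ small (already known from Lemma~\ref{abps1}) and in parallel $d(Tx_{2n_k+1}, T\xi)$; since $x_{2n_k} \to \xi$ and $d(x_{2n_k}, x_{2n_k+1}) \to d(A,B)$, property UC applied to $\{x_{2n_k}\}, \{\xi\}$(constant) against $\{x_{2n_k+1}\}$ is not directly available, so instead one shows $d(\xi, x_{2n_k+1}) \to d(A,B)$ by the triangle inequality $d(\xi, x_{2n_k+1}) \le d(\xi, x_{2n_k}) + d(x_{2n_k}, x_{2n_k+1}) \to d(A,B)$ and $d(\xi, x_{2n_k+1}) \ge d(A,B)$. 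Then feeding $d(\xi, x_{2n_k+1}) \to d(A,B)$ and $d(x_{2n_k+1}, Tx_{2n_k+1}) = d(x_{2n_k+1}, x_{2n_k+2}) \to d(A,B)$ into (ii) with the pair $(\xi, x_{2n_k+1})$ or $(x_{2n_k+2}, \cdot)$ yields $d(T\xi, x_{2n_k+2}) \to d(A,B)$ after passing to the limit in the contraction inequality, whence $d(\xi, T\xi) \le \liminf d(\xi, x_{2n_k+2}) + \text{(something}\to 0)$; combined with property UC on $\{x_{2n_k+2}\}$ and $\{\xi\}$ against the $B$-sequence, one obtains $x_{2n_k+2} \to \xi$ as well (so in fact the whole even subsequence clusters at $\xi$ consistently), and then $d(\xi, T\xi) = \lim d(x_{2n_k}, x_{2n_k+1}) = d(A,B)$. \textbf{The main obstacle} is bookkeeping the edge/orientation hypotheses of Definition~\ref{map definition_pre}(ii): one must verify at each application that the relevant pair lies in $A \times B$ and that at least one of the three edge conditions holds — this is where property $(*)$ and the $T^2$-edge-preserving hypothesis are genuinely needed, and getting the indices (even vs.\ odd, $2n_k$ vs.\ $2n_k+1$ vs.\ $2n_k+2$) aligned with those hypotheses is the delicate part; the limiting inequalities themselves are routine given Lemma~\ref{abps1} and the monotonicity of $\phi_1$, $\phi_2 - I$.
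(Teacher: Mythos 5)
Your proposal does not close; there are three concrete problems. First, your displayed inequality is not an instance of Definition~\ref{map definition_pre}(ii): its right-hand side is built from $d(x_{2n_k},\xi)$ and $m(x_{2n_k},\xi)$, i.e.\ from the pair $(x_{2n_k},\xi)\in A\times A$, to which condition (ii) does not apply (it is stated only for pairs in $A\times B$), and the left-hand side $d(x_{2n_k+1},T\xi)=d(Tx_{2n_k},T\xi)$ is not of the form $d(Tx,Ty)$ for the pair $(x_{2n_k},T\xi)$ you actually name. Second, the route you settle on ends by invoking property UC to force $x_{2n_k+2}\to\xi$; but UC is not among the hypotheses of Theorem~\ref{theorem for subsequence} --- the theorem inherits only the hypotheses of Lemma~\ref{abps1} together with property $(*)$ and the existence of a convergent subsequence --- so that step is not available. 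Third, you repeatedly flag the edge bookkeeping as ``the delicate part'' but never carry it out; each application of (ii) must exhibit which of the three edges holds. Here the needed edge is $(x_{2n_k},\xi)\in E(G)$, obtained from property $(*)$ applied to the subsequence, which in turn uses that property $(*)$ makes $E(G)$ a quasi-order so that consecutive terms $x_{2n_k},x_{2n_{k+1}}$ of the subsequence are joined by an edge.

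The correct core idea is present in your second attempt --- prove that the distance from $\xi$ to an adjacent odd-indexed iterate tends to $d(A,B)$ by the triangle inequality, then apply (ii) once to transfer this to a distance involving $T\xi$ --- and this is essentially the paper's route. The reliance on UC disappears if you shift the indices down by two so that the term which must vanish is $d(\xi,x_{2n_k})$ rather than $d(\xi,x_{2n_k+2})$: since $(Tx_{2n_k-1},\xi)=(x_{2n_k},\xi)\in E(G)$, condition (ii) applies to the pair $(\xi,x_{2n_k-1})\in A\times B$, and the monotonicity of $\phi_1$ and $\phi_2-I$ (together with $d(\xi,x_{2n_k-1})\ge d(A,B)$ and $m(\xi,x_{2n_k-1})\ge d(A,B)$) gives $d(T\xi,x_{2n_k})=d(T\xi,Tx_{2n_k-1})\le d(\xi,x_{2n_k-1})\le d(\xi,x_{2n_k})+d(x_{2n_k},x_{2n_k-1})$. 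Hence
$d(A,B)\le d(\xi,T\xi)\le d(\xi,x_{2n_k})+d(x_{2n_k},T\xi)\le 2\,d(\xi,x_{2n_k})+d(x_{2n_k},x_{2n_k-1})\to d(A,B)$
by Lemma~\ref{abps1}, with no appeal to UC, to sharp proximality, or to convergence of $x_{2n_k+2}$.
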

 
 \begin{proof}
 Choose a subsequence $\{x_{2n_k}\}$ of $\{x_{2n}\}$ such that $x_{2n_k}\to y$ in $A$. By property $(*),$ we have $(x_{2{n_k}},y)\in E(G)$ for $k\geq 1.$ Then 
 \begin{eqnarray*}
 d(A,B)\leq d(y,Ty)&\leq & d(y, x_{2{n_k}+2})+ d(Ty, x_{2{n_k}+2})\\
 & \leq & d(y, x_{2{n_k}+2})+ d(y, x_{2{n_k}+1})\\
 &\leq & d(y, x_{2{n_k}+2})+d(y, x_{2{n_k}})+d(x_{2{n_k}}, x_{2{n_k}+1})\\
 &\to & d(A,B)~\mbox{as}~k\to \infty~(\mbox{by Lemma \ref{abps1}}).
 \end{eqnarray*}
 Therefore, $d(Ty, y)=d(A,B).$
 \qed \end{proof}
 
One can see that Theorem 4 of \cite{Shahzad 2009} and Proposition 3.2 of \cite{Eldred 2006} are particular case of Theorem \ref{theorem for subsequence}. For this one can consider $T$ is a $G_0$-cyclic $(\phi, c+I)$- contraction, where $V(G_0)=X, E(G_0)=X\times X,~c$ is a constant; and particularly in the later case, for some $\alpha \in (0,1),~\phi (s)=(1-\alpha)s, \forall s\in [0,\infty).$
For a subset $S$ of $A,$ the map $T|_S:S\to B$ is said to be a best proximity operator (abbrev. BPO) on $S$ if there is a unique point $x^*$ in $S$ such that $d(x^*, Tx^*)=d(A,B)$ and $\displaystyle \lim_{n\to \infty}T^{2n}x=x^*$ for any $x\in S.$ The following result is a main best proximity theorem for a $G$-cyclic $(\phi_1, \phi_2)$-contraction in presence of property UC.

 \begin{theorem}\label{main theorem1}
  Let $A, B, T$ and $(X,d,G)$ be as in Lemma \ref{abps1}. Suppose $A$ is complete, has property $(*)$ and $(A,B)$ satisfies property UC. Then $T$ is a BPO on $[x]_{\tilde{G}}\cap A$ for $x \in X_{T^2}^A$.
 \end{theorem}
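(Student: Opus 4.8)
The plan is to obtain the best proximity point as the limit of an even-iterate orbit, to place it inside $[x]_{\tilde{G}}\cap A$, and then to pull every other orbit of that set onto it along a path in $\tilde{G}$. First I would fix $x\in X_{T^2}^A$ and set $x_0=x$, $x_{n+1}=Tx_n$ for $n\ge 0$. Since $x\in X_{T^2}^A$ and $T^2$ preserves edges on $A$, a one-line induction gives $(x_{2n},x_{2n+2})\in E(G)$ for every $n\ge 0$. Proposition \ref{proposition Cauchy} then makes $\{x_{2n}\}$ Cauchy in $A$, so completeness of $A$ produces a limit $x^*\in A$; applying property $(*)$ to $\{x_{2n}\}$ yields $(x_{2n},x^*)\in E(G)$ for all $n$, in particular $(x,x^*)\in E(G)\subseteq E(\tilde{G})$, so $x^*\in[x]_{\tilde{G}}\cap A$. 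Since $\{x_{2n}\}$ converges to $x^*$, Theorem \ref{theorem for subsequence} (whose proof identifies the limit of the convergent subsequence as a best proximity point) gives $d(x^*,Tx^*)=d(A,B)$, hence $x^*\in BP\left(T\big|_A\right)$, and Proposition \ref{proposition1} then gives $x^*\in X_{T^2}^A$ and $T^{2n}x^*=x^*$ for all $n\in\mathbb{N}$.

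\textbf{Pulling every orbit in $[x]_{\tilde{G}}\cap A$ onto $x^*$.} Next, given $y\in[x]_{\tilde{G}}\cap A$, I would use that $x^*$ also lies in $[x]_{\tilde{G}}$ and that $E(\tilde{G})$ is symmetric to fix a path $x^*=z_0,z_1,\dots,z_m=y$ in $\tilde{G}$, and then prove by induction on $k$ the statement: $d(T^{2n}z_k,x^*)\to 0$ if $z_k\in A$, and $d(T^{2n+1}z_k,x^*)\to 0$ if $z_k\in B$. The case $k=m$ is exactly $T^{2n}y\to x^*$, and $k=0$ is immediate from $T^{2n}x^*=x^*$. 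For the step from $z_k$ to $z_{k+1}$ I would compare the two orbits through their appropriately interleaved iterates: since $(z_k,z_{k+1})\in E(\tilde{G})$ and $T^2$ preserves edges on $A$, the iterates being compared stay edge-related, so Definition \ref{map definition_pre}(ii) applies to them (one of each pair lying in $A$, the other in $B$); arguing as in inequalities (\ref{for lemma abps1}) and (\ref{equivalence number}), and using that $\phi_1$ is increasing and $\phi_2-I$ is non-decreasing, the relevant distances are non-increasing and, by the ``$\phi_1$ increasing'' argument of Lemma \ref{abps1} and Theorem \ref{equivalence theorem}, converge to $d(A,B)$. Combining this with the induction hypothesis, with Lemma \ref{abps1}, and with property UC (through Lemma \ref{Suzuki lemma} wherever a Cauchy-ness step is needed) forces the orbit of $z_{k+1}$ to converge to $x^*$ in the stated sense, which closes the induction; hence $T^{2n}y\to x^*$ for every $y\in[x]_{\tilde{G}}\cap A$.

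\textbf{Uniqueness.} Finally, if $y\in[x]_{\tilde{G}}\cap A$ satisfies $d(y,Ty)=d(A,B)$, then $y\in BP\left(T\big|_A\right)$, so Proposition \ref{proposition1} gives $T^{2n}y=y$ for all $n$; together with $T^{2n}y\to x^*$ from the previous step this forces $y=x^*$. Thus $x^*$ is the unique best proximity point of $T$ in $[x]_{\tilde{G}}\cap A$ and $T^{2n}y\to x^*$ for every $y$ in that set, which is precisely the statement that $T$ is a BPO on $[x]_{\tilde{G}}\cap A$.

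\textbf{Where the difficulty lies.} I expect the inductive step of the second part to be the real obstacle. One has to make sure that, while travelling along the path in $\tilde{G}$, the edge requirement $\{(u,v),(u,Tv),(Tv,u)\}\cap E(G)\neq\emptyset$ of Definition \ref{map definition_pre}(ii) is genuinely available for the interleaved pairs of iterates being compared (this is where edge-preservation of $T^2$ on $A$, property $(*)$, the $G$-Chebyshev property of $(A,B)$ and the quasi-order property of $E(G)$ have to be orchestrated, and where one may have to track how membership in $X_{T^2}^A$ or its analogue passes along $\tilde{G}$-edges), and that property UC is invoked in exactly the form needed to upgrade ``distance $\to d(A,B)$'' to ``distance $\to 0$''. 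Everything else is a routine assembly of Propositions \ref{proposition Cauchy} and \ref{proposition1} with Theorem \ref{theorem for subsequence}.
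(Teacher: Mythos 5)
Your first and third parts are fine and essentially match the paper: the orbit $\{T^{2n}x\}$ is Cauchy by Proposition \ref{proposition Cauchy}, converges by completeness, its limit $x^*$ is a best proximity point lying in $[x]_{\tilde{G}}\cap A$ by property $(*)$, and uniqueness follows from Proposition \ref{proposition1} once convergence of all orbits to $x^*$ is known. The genuine gap is the second part, and you have flagged it yourself: the inductive step along the path $x^*=z_0,\dots,z_m=y$ is never carried out. "Arguing as in inequalities (\ref{for lemma abps1}) and (\ref{equivalence number})" is not a proof here, because those inequalities compare a point with its \emph{own} iterates (Lemma \ref{abps1}) or two orbits whose interleaved iterates are already known to be edge-related (Theorem \ref{equivalence theorem}); for a general $\tilde{G}$-edge $(z_k,z_{k+1})$ you have not exhibited which of $(u,v),(u,Tv),(Tv,u)$ lies in $E(G)$ for the interleaved iterates, nor handled the orientation of the edge (only $T^2$ preserves edges, and only on $A$), nor the fact that a path realizing $y\in[x]_{\tilde{G}}$ may pass through vertices outside $A\cup B$, where the dichotomy "$z_k\in A$ or $z_k\in B$" and the expression $T^{2n}z_k$ are meaningless. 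Since this step is the entire analytic content of the theorem, the proposal is incomplete as written.

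The paper avoids this by not redoing the path argument at all: it observes that $G$ restricted to $[x]_{\tilde{G}}\cap A$ is weakly connected and invokes the already-proved implication $(a)\Rightarrow(b)$ of Theorem \ref{equivalence theorem} (which, per the Remark following it, does not need sharp proximality), obtaining directly that $\{T^{2n}x\}$ and $\{T^{2n}y\}$ are Cauchy equivalent for every $y\in[x]_{\tilde{G}}\cap A$; the common limit is then shown to be a best proximity point by the same triangle-inequality computation you use, and a short contraction argument rules out two distinct limits. If you replace your unproven induction by this single citation of Theorem \ref{equivalence theorem}$(a)\Rightarrow(b)$, your argument closes and coincides with the paper's.
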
 
 \begin{proof}
 Suppose $x\in X_{T^2}^A$ and $y\in [x]_{\tilde{G}}\cap A.$ Since $A$ has property $(*),$ we have $x,y\in X_{T^2}^A$ and $G$ in $[x]_{\tilde{G}}\cap A$ is weakly connected. By Theorem \ref{equivalence theorem}, $\{T^{2n}x\}$ and $\{T^{2n}y\}$ are Cauchy equivalent. Let $x^*, y^*\in A$ such that $T^{2n}x\to x^*$ and $T^{2n}y\to y^*.$ Again by property $(*),$ for $n\in \mathbb{N},~(T^{2n}x, x^*)$ and $(T^{2n}y, y^*)$ are in $E(G)$ and hence $(x, x^*)$ and $(y, y^*)$ are in $E(G).$ But $(x,y)\in E(\tilde{G}).$ Then $(y, x^*)$ and $(x,y^*)$ are in $E(\tilde{G}).$ Thus $(x^*,y^*)\in E(\tilde{G})$ and hence $x^*, y^*\in [x]_{\tilde{G}}\cap A.$ Since $d(T^{2n+1}x, x^*)\leq d(T^{2n}x,T^{2n+1}x)+d(T^{2n}x,x^*)\to d(A,B)$ and  we see that 
 \begin{eqnarray*}
 d(x^*, Tx^*)&\leq & d(x^*, T^{2n+2}x)+ d(T^{2n+2}x, Tx^*)\\
 &\leq & d(x^*, T^{2n+2}x)+ d(T^{2n+1}x, x^*)\to d(A,B).
 \end{eqnarray*}
 Thus, $d(x^*, Tx^*)=d(A,B)$ and analogously one can show that $d(y^*, Ty^*)=d(A,B).$ 
  Suppose $x^*\neq y^*.$ Then $\min\{d(x^*, Ty^*), d(y^*, Tx^*)\}>d(A,B).$ Since $(y^*,T^2x^*), (x^*,T^2y^*)\in E(\tilde{G})$, we have 
 \begin{eqnarray*}
 d(x^*, Ty^*)&=& d(T^2x^*, Ty^*)\\
  &\leq & (I-\phi_1)(d(y^*, Tx^*))+(I-\phi_2)(m(y^*, Tx^*)+(\phi_1+\phi_2-I)(d(A,B))\\
  &\leq & (I-\phi_1)(d(y^*, Tx^*))+(I-\phi_2)(d(y^*, Ty^*)+(\phi_1+\phi_2-I)(d(A,B))\\
  &< & d(y^*, Tx^*)-\phi_1(d(y^*, Tx^*))+(I-\phi_2)(d(A,B)+\phi_1(d(y^*, Tx^*)) + (\phi_2-I)(d(A,B))\\
 &= &d(y^*, Tx^*).
 \end{eqnarray*}   
   Interchanging $x^*$ and $y^*,$ we get $d(y^*, Tx^*)
  <d(x^*, Ty^*).$ This is absurd. Hence $x^*=y^*.$ This completes the proof.
 \qed \end{proof}

 As an immediate consequence of the above theorem we get
 \begin{corollary}\label{corollary of main theorem 1}

   Let $A, B, T$ and $(X,d,G)$ be as in Theorem \ref{main theorem1}.
  If $G$ is weakly-connected, then $T$ has a unique best proximity point $x^*$ in $A$ and $\displaystyle \lim_{n \to \infty} T^{2n}x=x^*$ for any $x \in A.$
 \end{corollary}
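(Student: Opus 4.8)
The plan is to obtain the corollary directly from Theorem \ref{main theorem1}: the role of weak connectedness is precisely to replace the local set $[x]_{\tilde{G}}\cap A$ occurring there by all of $A$. Concretely, I would fix a point $x_0\in X_{T^2}^A$ (nonemptiness of this set is discussed below). Since $G$ is weakly connected, $\tilde{G}$ is connected, so there is a path in $\tilde{G}$ from $x_0$ to every vertex of $G$; hence $[x_0]_{\tilde{G}}=V(G)=X$ and therefore $[x_0]_{\tilde{G}}\cap A=A$. The hypotheses of Theorem \ref{main theorem1} ($A$ complete and with property $(*)$, and $(A,B)$ with property UC) are exactly those assumed here, so that theorem applies and yields that $T$ is a BPO on $[x_0]_{\tilde{G}}\cap A=A$.

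It then remains only to unwind the definition of a BPO on $A$: this says there is a unique $x^*\in A$ with $d(x^*,Tx^*)=d(A,B)$ and $\lim_{n\to\infty}T^{2n}x=x^*$ for every $x\in A$, which is exactly the assertion of the corollary. In particular no separate uniqueness argument is needed, since ``BPO on $A$'' already asserts uniqueness of the best proximity point inside $A$; if one prefers an explicit check, given any best proximity point $z\in A$, Proposition \ref{proposition1} gives $z\in X_{T^2}^A$ and $T^{2n}z=z$ for all $n$, while the convergence above gives $T^{2n}z\to x^*$, forcing $z=x^*$.

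I expect the only point needing genuine care to be the nonemptiness of $X_{T^2}^A$; everything else is a translation of Theorem \ref{main theorem1} through the identity $[x_0]_{\tilde{G}}\cap A=A$. This is consistent with the rest of Section 3, where existence of a point $x_0\in X_{T^2}^A$ is taken as a hypothesis (e.g.\ in Theorem \ref{theorem for subsequence}), so presumably it belongs to the running assumptions of the corollary as well; if instead one wants to derive it, the natural route is to exploit the $G$-Chebyshev property built into Definition \ref{map definition_pre}: for $a\in A_0$ with proximal partner $b\in B$ one has $(a,b)\in E(G)$, and substituting $(x,y)=(a,b)$ into inequality (ii), using $d(a,b)=d(A,B)$ together with the monotonicity of $\phi_1$ and of $\phi_2-I$, forces $d(Ta,Tb)=d(A,B)$; hence the successive proximal pairs produced by $T$ remain edges of $G$, and this, combined with condition (i) of Definition \ref{map definition_pre}, is the information one would use to exhibit a vertex of $X_{T^2}^A$. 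That verification, rather than anything in the deduction from Theorem \ref{main theorem1}, is where I would expect the work to lie.
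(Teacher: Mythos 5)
Your argument is correct and coincides with the paper's, which states the corollary as an immediate consequence of Theorem \ref{main theorem1}: weak connectedness gives $[x_0]_{\tilde{G}}\cap A=A$ for any $x_0\in X_{T^2}^A$, and unwinding the definition of a BPO on $A$ yields exactly the asserted uniqueness and convergence. Your caveat about the nonemptiness of $X_{T^2}^A$ is well taken (the paper leaves this implicit as well), though note that your sketched derivation from the $G$-Chebyshev property only produces edges between parallel pairs in $A\times B$, never an edge of the form $(a,T^2a)$ in $A\times A$, so it would not by itself exhibit a point of $X_{T^2}^A$; treating that nonemptiness as a standing hypothesis, as you primarily propose, is the right reading.
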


 The following example illuminates Theorem \ref{main theorem1}.
 \begin{example}\label{example of main theorem 1}
 Let $X=\mathbb{R}^2$ with $\|\cdot\|_{1}$ norm and $A=\{(0, x)\in \mathbb{R}^2: x\in \{0, \frac{1}{2}, \frac{1}{2^2}, \frac{1}{2^3},...\}\}, B=\{(1,y)\in \mathbb{R}^2: y\in \{0,\frac{1}{2}, \frac{1}{2^2}, \frac{1}{2^3},...\}\}$. Consider a graph $G$ on $X$ such that $V(G)=X$ and $E(G)=\{(x,y)\in A\times A: d(x,y)\leq \frac{1}{2}\}\cup \{(x,y)\in B\times B: d(x,y)\leq \frac{1}{2}\}\cup \{(x,y)\in X\times X: d(x,y)=1\}.$ 
  Observe that $A$ is complete, $A$ has property $(*)$ and $(A,B)$ has property UC. Moreover, $G$ in $A$ is weakly connected and for any $x\in A,~ [x]_{\tilde{G}}\cap A=A$. Define a map $T$ on $X$ as the following:
\[
 T(0,x)=
 	\begin{cases}
 	(1,0) & \text{for}~ x=0\\
 	(1,\frac{1}{2^{n+1}}) & \text{for}~ x=\frac{1}{2^n}, n\geq 1
 	\end{cases};~
 	 T(1,y)=
 	\begin{cases}
 	(0,0) & \text{for}~ y=0\\
 	(0,\frac{1}{2^{n+1}}) & \text{for}~ y= \frac{1}{2^n}, n\geq 1.
 	\end{cases}
 \]
 A simple calculation shows that $T$ is a $G$-cyclic $(\phi, c+I)$-contraction with $\phi(s)=\frac{1}{2}s$ and $c$ is a constant. 
The case when $(x, y)\in A\times B\cup B\times A$ with $\|x-y\|_1=1$ is trivial (since $(A,B)$ is a sharp proximal pair we have $\|Tx-Ty\|_1=1$ and hence we are through). Let $x=(0,\frac{1}{2^n})$ and $y=(1,\frac{1}{2^{n+p}})$ for some $n, p\in \mathbb{N}$ with $(x,Ty)\in E(G).$ We have $\|Tx-Ty\|_1 = \left\|\left(0,\frac{1}{2^{n+1}}\right)-\left(1,\frac{1}{2^{n+p+1}}\right)\right\|_1 = 1+ \frac{1}{2^n}\left(\frac{1}{2}-\frac{1}{2^{p+1}}\right).$
Since, $ \|x-y\|_1-\phi(\|x-y\|_1)+\phi(d(A,B)) = \left\|\left(0,\frac{1}{2^{n}}\right)-\left(1,\frac{1}{2^{n+p}}\right)\right\|_1= 1+\left(\frac{1}{2^{n}}-\frac{1}{2^{n+p}}\right)=1+\frac{1}{2^n}\left(1-\frac{1}{2^{p}}\right),$ we have
{\small{\begin{eqnarray*}
\|x-y\|_1-\phi(\|x-y\|_1)+\phi(d(A,B)) &=& 1+\frac{1}{2^n}\left(1-\frac{1}{2^{p}}\right)-\phi\left(1+\frac{1}{2^n}\left(1-\frac{1}{2^{p}}\right)\right)+\phi(1)\\
&=& 1+\frac{1}{2^n}\left(1-\frac{1}{2^{p}}\right)-\left(\frac{1}{2}+\frac{1}{2^{n+1}}\left(1-\frac{1}{2^{p}}\right)\right)+\frac{1}{2}\\
&=& 1+\frac{1}{2^n}\left(\frac{1}{2}-\frac{1}{2^{p+1}}\right).
\end{eqnarray*}}}
  Then by Corollary \ref{corollary of main theorem 1}, $T$ is a BPO on $A.$ Observe that $BP\left(T\big|_A\right)=\{(0,0)\}.$
 \end{example}
 
 The following corollary is a direct consequence of Proposition \ref{proposition1} and Theorem \ref{main theorem1}.
 \begin{corollary}
   Let $A, B, T$ and $(X,d,G)$ be as in Theorem \ref{main theorem1}. Then $BP\left(T\big|_A\right)\neq \emptyset$ if and only if $X_{T^2}^A\neq \emptyset.$
 \end{corollary}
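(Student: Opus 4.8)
The plan is to treat the two implications separately, each one being a one-line appeal to a previously established result, so the "proof" is really just an assembly of Proposition \ref{proposition1} and Theorem \ref{main theorem1}. For the forward direction, I would assume $BP\left(T\big|_A\right)\neq\emptyset$ and pick any best proximity point $x\in BP\left(T\big|_A\right)$. Since the hypotheses of Theorem \ref{main theorem1} include those of Proposition \ref{proposition1} (in particular $(A,B)$ has property UC), Proposition \ref{proposition1} gives $BP\left(T\big|_A\right)\subseteq X_{T^2}^A$, whence $x\in X_{T^2}^A$ and thus $X_{T^2}^A\neq\emptyset$.

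For the converse, I would assume $X_{T^2}^A\neq\emptyset$ and fix some $x\in X_{T^2}^A$. Applying Theorem \ref{main theorem1} to this $x$, the map $T$ is a BPO on $[x]_{\tilde G}\cap A$; by the definition of a best proximity operator this furnishes a (unique) point $x^*\in[x]_{\tilde G}\cap A\subseteq A$ with $d(x^*,Tx^*)=d(A,B)$. Hence $x^*\in BP\left(T\big|_A\right)$ and $BP\left(T\big|_A\right)\neq\emptyset$, completing the equivalence.

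There is essentially no obstacle here: both directions are immediate consequences of the cited results, and the only point requiring a word of care is checking that the standing hypotheses of Theorem \ref{main theorem1} ($A$ complete, $A$ has property $(*)$, $(A,B)$ has property UC, and $T$ a $G$-cyclic $(\phi_1,\phi_2)$-contraction) indeed subsume what is needed to invoke Proposition \ref{proposition1}; since they do, the corollary follows at once. For this reason I would present it simply as: "This is immediate from Proposition \ref{proposition1} and Theorem \ref{main theorem1}," possibly spelled out in the two short steps above.
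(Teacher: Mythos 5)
Your proof is correct and matches the paper exactly: the authors state this corollary as ``a direct consequence of Proposition \ref{proposition1} and Theorem \ref{main theorem1}'', which is precisely the two-step assembly you give (forward direction from $BP\left(T\big|_A\right)\subseteq X_{T^2}^A$, converse from $T$ being a BPO on $[x]_{\tilde G}\cap A$).
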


 The following example illustrates that, in Theorem \ref{main theorem1}, if $x\notin X_{T^2}^A,$ then $T$ is not a BPO on $[x]_{\tilde{G}}\cap A.$
 \begin{example}\label{not BPO}
 Let $X=[0,1]\times [0,1]$ be endowed with $\|\cdot\|_1$ norm and $A=\{(0,x)\in \mathbb{R}^2:0\leq x\leq 1\}, B=\{(1,y)\in \mathbb{R}^2:0\leq y\leq 1\}.$ Then $X$ is complete and $A,B$ are closed bounded subsets of $X.$ Define a relation $\preceq$ on $X$ by the following rule:\\
 for $\bar{x}=(a, x), \bar{y}=(b,y)\in X,$ we say $\bar{x}\preceq \bar{y}$ if and only if $a,b\in [0,1]$ and either $x=\frac{y}{2}$ or $y=\frac{x}{2}$ or $x=y.$ We say $(\bar{x}, \bar{y})\in E(G)$ if $\bar{x}\preceq \bar{y}.$

Define a map $T:A\cup B\to A\cup B$ by
 \[
 T(0,x)=
 	\begin{cases}
 	(1,\frac{x}{2}) & \text{for}~ x\in (0,1);\\
 	(1,0) & \text{for}~ x=0;\\
 	(1,1) & \text{for}~ x=1.
 	\end{cases}~
 	 T(1,y)=
 	\begin{cases}
 	(0,\frac{y}{2}) & \text{for}~ y\in (0,1);\\
 	(0,0) & \text{for}~ y=0;\\
 	(0,1) & \text{for}~ y=1.
 	\end{cases}
 \] 

  One can verify that $T$ is a $G$-cyclic $(\phi, c+I)$-contraction with $\phi(s)=\frac{1}{2}s$ and $c$ is a constant. 
  Let $(\bar{x},\bar{y})\in A\times B$ such that $(\bar{x},\bar{y})\in E(G),$ where $\bar{x}=(0,x)$ and $\bar{y}=(0,y)$ with $x,y\in (0,1).$ Then $\|\bar{x}-\bar{y}\|_1=1+|x-y|$ and $\|T\bar{x}-T\bar{y}\|_1=1+\frac{|x-y|}{2}= 1+|x-y|-\left(\frac{1}{2}+\frac{|x-y|}{2}\right)+\frac{1}{2}=\|\bar{x}-\bar{y}\|_1-\phi(\|\bar{x}-\bar{y}\|_1)+\phi(1).$ For $((0,0),(1,0))\in E(G),$ we have $(T(0,0),T(1,0))\in E(G)$ and $\|T(0,0)-T(1,0)\|_1=\|(0,0)-(1,0)\|_1.$ Also, $((0,1),(1,1))\in E(G),$ we have $(T(0,1),T(1,1))=((1,1),(0,1))\in E(G)$ and $\|T(0,1)-T(1,1)\|_1=1=\|(0,1)-(1,1)\|_1.$ Let $(\bar{x},\bar{y})\in A\times B$ such that $(\bar{x},T\bar{y})\in E(G)$ or $(T\bar{y},\bar{x})\in E(G)$ where $\bar{x}=(0,x)$ and $\bar{y}=(0,y)$ with $x,y\in (0,1).$ 
 A similar computation shows that $\|T\bar{x}-T\bar{y}\|_1\leq\|\bar{x}-\bar{y}\|_1-\phi(\|\bar{x}-\bar{y}\|_1)+\phi(1).$ We have $((0,0),T(1,0))$ and $((0,1),T(1,1))$ are in $E(G)$ and $\|T\bar{x}-T\bar{y}\|_1\leq \|\bar{x}-\bar{y}\|_1-\phi(\|\bar{x}-\bar{y}\|_1)+\phi(1)$ for $\bar{x}=(0,0), \bar{y}=(1,0)$ and $\bar{x}=(0,1), \bar{y}=(1,1).$ We observe that $((0,0),T(1,1))=((1,0),(0,1))\notin E(G)$ and $((0,1),T(1,0))=((1,1),(0,0))\notin E(G).$ And for this $\|T(0,0)-T(1,1)\|_1=\|(1,0)-(0,1)\|_1=2>2-1+\frac{1}{2}=\|(0,0)-(1,1)\|_1-\phi(\|(0,0)-(1,1)\|_1)+\phi(1).$
   
   For $x\in (0,1), \bar{x}=(0,x)\in A$ and $[\bar{x}]_{\tilde{G}}\cap A=\{(0,z): \exists n\in \mathbb{N}~\mbox{such that}~z=\frac{x}{2^{n-1}}~\mbox{or}~x=\frac{z}{2^{n-1}}\}.$ But $T|_{[\bar{x}]_{\tilde{G}}\cap A}$ is not a $BPO.$  Observe that $BP\left(T\big|_A\right)=\{(0,0), (0,1)\}\nsubseteq [\bar{x}]_{\tilde{G}}.$ It is worth mentioning that $\bar{x}\notin X^A_{T^2}$ for $x\in (0,1).$ 
 \end{example}
 
 Theorem \ref{on cardinality} ensures the number of connected subgraphs in $G$ is same with the number of best proximity points of $T$ in $A.$
 \begin{theorem}\label{on cardinality}
   Let $A, B, T$ and $(X,d,G)$ be as in Theorem \ref{main theorem1}. Then $\# BP\left(T\big|_A\right)= \# E_A$, where $\#$ denotes the cardinality of the corresponing set.
 \end{theorem}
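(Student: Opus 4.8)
The plan is to set up an explicit bijection between $BP\left(T\big|_A\right)$ and $E_A$, where $E_A$ is the family of weakly connected components $[x]_{\tilde{G}}\cap A$ of the subgraph of $G$ induced on $A$ that meet $X_{T^2}^A$; equivalently $E_A=\{[x]_{\tilde{G}}\cap A:x\in X_{T^2}^A\}$. The entire argument is assembled from Proposition \ref{proposition1}, Theorem \ref{main theorem1}, and the definition of a best proximity operator.

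First I would define $\Psi\colon BP\left(T\big|_A\right)\to E_A$ by $\Psi(x^*)=[x^*]_{\tilde{G}}\cap A$. This is well defined: by Proposition \ref{proposition1} every $x^*\in BP\left(T\big|_A\right)$ belongs to $X_{T^2}^A$, so $[x^*]_{\tilde{G}}\cap A\in E_A$. For injectivity, let $x^*,y^*\in BP\left(T\big|_A\right)$ with $\Psi(x^*)=\Psi(y^*)$, so that $y^*\in[x^*]_{\tilde{G}}\cap A$; since $x^*\in X_{T^2}^A$, Theorem \ref{main theorem1} tells us that $T$ is a BPO on $[x^*]_{\tilde{G}}\cap A$, and by the definition of a BPO there is exactly one point of that set lying at distance $d(A,B)$ from its $T$-image, forcing $x^*=y^*$. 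For surjectivity, given $C\in E_A$ choose $x\in X_{T^2}^A$ with $C=[x]_{\tilde{G}}\cap A$; by Theorem \ref{main theorem1} there is $x^*\in C$ with $d(x^*,Tx^*)=d(A,B)$, hence $x^*\in BP\left(T\big|_A\right)$, and $x^*\in C$ forces $[x^*]_{\tilde{G}}=[x]_{\tilde{G}}$, so $\Psi(x^*)=C$. Thus $\Psi$ is a bijection, whence $\#BP\left(T\big|_A\right)=\#E_A$.

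I do not expect a genuine obstacle, since the whole statement is carried by Theorem \ref{main theorem1}. The one spot that needs a moment's care is the injectivity step, where one must invoke uniqueness of the best proximity point \emph{inside the component} (which is part of the definition of a BPO) rather than mere uniqueness of the $T^{2n}$-limit; after that, the rest is bookkeeping with the equivalence classes $[\cdot]_{\tilde{G}}$ (reflexivity from the trivial loops, symmetry and transitivity from the construction of $\tilde{G}$).
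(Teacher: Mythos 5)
Your proposal is correct and follows essentially the same route as the paper: both define the map sending a best proximity point to its component $[\cdot]_{\tilde{G}}$, derive well-definedness from Proposition \ref{proposition1}, and obtain injectivity and surjectivity from the BPO conclusion of Theorem \ref{main theorem1} (the paper phrases injectivity via $x_2=\lim_n T^{2n}x_2=\lim_n T^{2n}x_1=x_1$, while you invoke the uniqueness clause in the definition of a BPO directly --- a cosmetic difference). Your explicit reading of $E_A$ as $\{[x]_{\tilde{G}}\cap A:x\in X_{T^2}^A\}$ matches what the paper's own surjectivity step implicitly assumes.
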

 
 \begin{proof}
 Define $P: BP\left(T\big|_A\right)\to E_A$ by $Px=[x]_{\tilde{G}}$ for $x\in BP\left(T\big|_A\right).$ 
   Let $x_1, x_2\in BP\left(T\big|_A\right)$ such that $Px_1=Px_2.$ Then $x_1, x_2\in X_{T^2}^A$ and $x_2\in [x_1]_{\tilde{G}}$. By Theorem \ref{main theorem1}, $T|_{[x_1]_{\tilde{G}}\cap A}$ is a BPO. 
   Thus, $x_2=\displaystyle \lim_{n\to \infty}T^{2n}x_2 =\lim_{n\to \infty}T^{2n}x_1=x_1.$
Hence $P$ is injective.
 To see $P$ is surjective, let $x\in X_{T^2}^A.$ Then $(x,T^2x) \in E(G).$ 
  By Theorem \ref{main theorem1}, there exists a best proximity point $x^*$ of $T$ in $A$ with $\displaystyle \lim_{n\to \infty} T^{2n}x = x^*$. 
   By property $(*),$ we have $[x]_{\tilde{G}}=[x^*]_{\tilde{G}}$. Thus $Px^*=[x]_{\tilde{G}}.$ This completes the proof. 
 \qed \end{proof}
 
 We illuminate the above theorem by the following example:
 \begin{example}
 Let $A, B, T$ and $(X,d,G)$ be as in Example \ref{not G-contraction 3}. One can see that $ A$ is complete, has property $(*)$ and $(A, B)$ satisfies property UC. A simple numerical computation leads that $BP\left(T\big|_A\right)=\left\{f_{\frac{1}{n}}: n\in \mathbb{N}\right\}\cup \{f_0\}$ and $E_A=\displaystyle \left\{\left[f_{\frac{1}{n}}\right]: n\in \mathbb{N}\right\}\cup \{\left[f_0\right]\}.$
 \end{example}
 

\section{Fixed point theorems for $G$-cyclic contractions}

Let $(A,B)$ be a $G$-Chebyshev pair in a metric space with a graph $(X,d,G).$
Suppose $\phi:[0,\infty)\to [0,\infty)$ is an increasing map and $c$ is a constant. Assume that $T$ is a $G$-cyclic $(\phi, c+I)$-contraction on $A\cup B.$
 If we replace the non-emptyness of the set $X_{T^2}^A$ by the non-emptyness of $X_T^A$, then it is easy to verify that the conclusions of the above all results are valid for the cyclic map $T$ that satisfies
\begin{itemize}
 \item[(i)] $T$ preserves the edges on $A;$
  \item[(ii)] $d(Tx,Ty)\leq d(x,y)-\phi(d(x,y))+ \phi(d(A,B))$ for every edge $(x,y)\in A\times B\cup B\times A.$
 \end{itemize}

In this section we aim to prove a few fixed point theorems for such a $T.$
 A necessary condition for the existence of a fixed point of $T$ is $d(A,B)=0.$ Thus $d(Tx,Ty)\leq (I-\phi)(d(x,y))+\phi(0)$ for every egde $(x,y)\in A\times B\cup B\times A.$ This motivates us to define a type of contraction as follows:

 \begin{definition}
 Let $A_1,~A_2$ be a $G$-Chebyshev pair of non-empty subsets a metric space with a graph $(X,d,G)$ and let $T_1: A_1\to A_2, T_2:A_2\to A_1$ be two mappings. Suppose $\psi:[0,\infty)\to [0,1)$ is a non-decreasing map. The pair $(T_1,T_2)$ is said to be a $G$-$\psi$-contraction if
 $(T_ix,T_jT_ix)\in E(G)$ and $d(T_ix,T_jT_ix)\leq \psi (d(x,T_ix))d(x,T_ix)$ for $x\in A_i$ with $(x,T_ix)\in E(G)$ and $i\neq j \in \{1,2\}$.
 \end{definition}

 It has to be observed that if $T$ is $G$-cyclic $(\phi, c+I)$-contraction on $A\cup B$ for an increasing map $\phi: [0, \infty)\to [0, \infty)$ and a constant $c$, then $(T|_A, T|_B)$ is $G$-$\psi$-contraction, where
 $t(1-\psi(t))=\phi(t)-\phi\left(d(A,B)\right)$ for all $t\in \left[d(A,B),\infty\right).$ 
The following theorem ensures the existence of a common fixed point for a $G$-$\psi$-contraction.
 \begin{theorem}\label{fixedpoint 1}
 Let $(A,~B)$ be a pair of two non-empty subsets a metric space with a graph $(X,d,G)$ and $(T_1,T_2)$ is a $G$-$\psi$-contraction on $A\cup B$ for a non-decreasing function $\psi:[0,\infty) \to [0,1).$ Suppose $A$ is complete and $A\cup B$ has property $(*)$. If $X^A_{T_1}\neq \emptyset$, then $A\cap B\neq \emptyset$ and $(T_1,T_2)$ has a common fixed point $p$ in $A\cap B.$ Further for any $x_0\in X^A_{T_1}, ~\{x_{n}\}$ converges to $p$, here $x_{2n}=(T_2\circ T_1)^nx_0$ and $x_{2n+1}=T_1x_{2n}, n\geq 0$.
 \end{theorem}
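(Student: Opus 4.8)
The plan is to run the classical Picard-iteration argument adapted to the graph setting. Fix $x_0\in X^A_{T_1}$, so $(x_0,T_1x_0)\in E(G)$, and define the sequence $x_{2n}=(T_2\circ T_1)^nx_0$, $x_{2n+1}=T_1x_{2n}$. First I would show by induction that $(x_n,x_{n+1})\in E(G)$ for all $n$: the defining property of a $G$-$\psi$-contraction says that whenever $(x,T_ix)\in E(G)$ then $(T_ix,T_jT_ix)\in E(G)$, which propagates the edge condition along the whole orbit starting from the hypothesis $(x_0,T_1x_0)\in E(G)$. Simultaneously the contraction inequality gives $d(x_{n+1},x_{n+2})\le \psi(d(x_n,x_{n+1}))\,d(x_n,x_{n+1})\le d(x_n,x_{n+1})$, so $\{d_n\}:=\{d(x_n,x_{n+1})\}$ is non-increasing; call its limit $r$. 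Since $\psi$ is non-decreasing, $d_{n+1}\le\psi(d_0)d_n$ for all $n$ (using $d_n\le d_0$), and $\psi(d_0)<1$, so in fact $d_n\to 0$ geometrically.

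Next I would prove $\{x_n\}$ is Cauchy. The geometric decay $d_n\le \psi(d_0)^n d_0$ is exactly what is needed: $d(x_n,x_m)\le\sum_{k=n}^{m-1}d_k\le \dfrac{\psi(d_0)^n}{1-\psi(d_0)}d_0\to 0$. By completeness of $A$ — noting that $x_{2n}\in A$ and $x_{2n+1}\in A_2=B$, but the full sequence converges in $X$ and its even-indexed part lies in the complete set $A$, so the limit $p$ lies in $A$ (and since $d(x_n,x_{n+1})\to 0$ the odd part converges to the same $p$, which then also must satisfy $p\in B$ once $B$ is shown closed — more carefully, one gets $p\in A$ from completeness of $A$ applied to $\{x_{2n}\}$, and then $p=\lim x_{2n+1}$ shows $p$ is a limit of points of $B$; the pair structure plus property $(*)$ will be used to place $p$ in $A\cap B$). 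Let me instead argue directly for the fixed-point property, which also pins down membership.

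To show $p$ is a common fixed point: by property $(*)$ on $A\cup B$, since $x_n\to p$ and $(x_n,x_{n+1})\in E(G)$, we get $(x_n,p)\in E(G)$ for all $n$; in particular $(x_{2n},p)\in E(G)$ with $x_{2n}\in A$ and $(x_{2n+1},p)\in E(G)$ with $x_{2n+1}\in B$. Then apply the contraction inequality with $x=x_{2n}$, $T_1x=x_{2n+1}$: we need an edge between $p$ and $T_1p$, which we can obtain by writing $d(x_{2n+1},T_1p)=d(T_1x_{2n},T_1p)$ and using the $G$-$\psi$-contraction condition once we know $(x_{2n},p)\in E(G)$ lets us compare $T_1x_{2n}$ and $T_1p$; concretely one shows $d(T_1p,p)\le d(T_1p,x_{2n+1})+d(x_{2n+1},p)$ and bounds $d(T_1p,x_{2n+1})$ via $\psi$ times a quantity tending to $0$, forcing $d(T_1p,p)=0$, i.e. $T_1p=p$. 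An identical computation with $T_2$ and the odd-indexed subsequence gives $T_2p=p$. Since $T_1p\in B$ and equals $p$, and $p\in A$, we conclude $p\in A\cap B$, so $A\cap B\neq\emptyset$. Uniqueness of the common fixed point within $[x_0]_{\tilde G}$ (if claimed) follows by the standard contraction estimate: if $p,q$ are two such fixed points connected in $\tilde G$, then $d(p,q)=d(T_1p,T_1q)\le\psi(d(p,q))d(p,q)<d(p,q)$ unless $d(p,q)=0$.

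The main obstacle I anticipate is the bookkeeping of \emph{which} edges are available when we apply the contraction inequality at the limit point $p$: the definition only guarantees $(T_ix,T_jT_ix)\in E(G)$ when $(x,T_ix)\in E(G)$, so to estimate $d(T_1p,p)$ one must carefully chain $d(T_1x_{2n},T_1p)$ — this requires knowing the edge $(x_{2n},p)$ sits in the right "slot" of the pair $A\times B$, and that the $G$-$\psi$-contraction inequality can be invoked with $p$ in place of an orbit point. Property $(*)$ on \emph{all} of $A\cup B$ is precisely what makes this work, and the non-decreasing monotonicity of $\psi$ together with $\psi(d_0)<1$ is what upgrades "$\{d_n\}$ non-increasing" to "geometric decay", which is indispensable for the Cauchy step. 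Everything else is routine triangle-inequality estimation.
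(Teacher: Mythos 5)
Your construction of the orbit, the induction giving $(x_n,x_{n+1})\in E(G)$, the geometric estimate $d_{n+1}\le\psi(d_0)\,d_n$, the Cauchy argument, and the identification of the limit $p\in A$ via completeness all coincide with the paper's proof. The genuine gap is in the step where you show $T_1p=p$. You propose to bound $d(x_{2n+1},T_1p)=d(T_1x_{2n},T_1p)$ by ``$\psi$ times a quantity tending to $0$'', i.e.\ to use an inequality of the form $d(T_1x,T_1y)\le\psi(\cdot)\,d(x,y)$ for two distinct points $x,y\in A$. No such inequality is available: a $G$-$\psi$-contraction only controls distances of the shape $d(T_ix,T_jT_ix)$ --- between the first and second iterates along a single orbit --- and never compares $T_1$ evaluated at two different points. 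Property $(*)$, which you invoke as ``precisely what makes this work'', only supplies the edges $(x_n,p)\in E(G)$; it cannot convert the orbital contraction condition into a two-point Lipschitz-type condition on $T_1$. Since this is exactly the step that turns the limit of the orbit into a fixed point, it is the crux of the theorem rather than bookkeeping, and as written your argument does not close.

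The paper's proof estimates a differently shaped quantity: $d(x_{2n+2},T_1p)=d(T_2T_1x_{2n},T_1p)$, which matches the admissible pattern $d(T_jT_iy,\,T_ix)$ with $x=p$ and $y=x_{2n}$ both in $A$, yielding $d(x_{2n+2},T_1p)\le\psi(d(x_{2n+1},p))\,d(x_{2n+1},p)\to0$ and hence $T_1p=\lim x_{2n+2}=p$; the analogous estimate with $d(x_{2n+1},T_2p)$ gives $T_2p=p$, and then $p=T_1p\in B$ places $p$ in $A\cap B$. Be aware that even this step tacitly uses the two-point form of the contraction, $d(T_ix,T_jT_iy)\le\psi(d(x,T_iy))\,d(x,T_iy)$ for $(x,T_iy)$ an edge, which the paper only states explicitly after the theorem; with the purely orbital form of the definition the passage to the limit would need an additional continuity-type hypothesis. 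To repair your argument, replace $d(T_1x_{2n},T_1p)$ by $d(T_2T_1x_{2n},T_1p)=d(x_{2n+2},T_1p)$ and feed the edge between $x_{2n+1}$ and $p$ supplied by property $(*)$ into the slot $(x,T_iy)$ of that condition.
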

 
 \begin{proof}
 Fix an $x_0\in X^A_{T_1}.$ Set $T=T_2\circ T_1, x_{2n}=T^nx_0$ and $x_{2n+1}=T_1\circ T^nx_0.$ Then $d_n\leq (\psi(d(x_0,T_1x_0)))^nd(x_0,T_1x_0)$ for $n\geq 1$, where $d_n=d(x_{n}, x_{n+1})$. Now for any fixed $n,m\geq 1,$
 {\small{\begin{eqnarray*}
 d(x_n,x_{n+m}) &\leq & d(x_n,x_{n+1})+d(x_{n+1},x_{n+2})+\cdots+d(x_{n+m-1}, x_{n+m})\\
 &\leq & \left(\psi(d(x_0,x_1))\right)^nd(x_0,x_1)
 +\cdots +~ (\psi(d(x_0,x_1)))^{n+m-1}d(x_0,x_1)\\
 &=& (\psi(d(x_0,x_1)))^nd(x_0,x_1) \left[1+\psi(d(x_0,x_1))
 +\cdots + ~(\psi(d(x_0,x_1)))^{n+m-2}\right]\\
 &\leq & (\psi(d(x_0,x_1)))^nd(x_0,x_1) [1+\psi(d(x_0,x_1))+(\psi(d(x_0,x_1)))^2+\cdots]\\
 &=& \frac{(\psi(d(x_0,x_1)))^nd(x_0,x_1) }{1-\psi(d(x_0,x_1))}.
 \end{eqnarray*}}}
 Hence, $\{x_{n}\}$ is Cauchy. Since $A$ is complete, there exists $p\in A$ such that $x_{2n}\to p.$ Thus $x_{2n+1}\to p$.
 Therefore, $(x_{2n}, x_{2n+2}), (x_{2n},p), (x_{2n+1}, p)$ in $E(G)$ for $n\geq 0.$ Now, $p\in A$ implies $T_1p\in B$ and 
  \begin{eqnarray*}
 d(x_{2n+2}, T_1p)\leq \psi(d(x_{2n+1}, p))d(x_{2n+1}, p) \leq d(x_{2n+1}, p)\to 0.
  \end{eqnarray*}
  Then $p=T_1p$ and hence $p\in A\cap B.$ Also, 
  \begin{eqnarray*}
  d(x_{2n+1}, T_2p)&\leq & \psi(d(x_{2n}, p))d(x_{2n}, p)\leq d(x_{2n}, p)\to 0.
  \end{eqnarray*}
  Thus $p=T_2p.$ Hence $p$ is a commom fixed point of $(T_1, T_2).$
 \qed \end{proof}
 Let's suppose the $G$-$\psi$-contraction pair $(T_1,T_2)$ satisfies $(T_ix,T_jT_iy)\in E(G)$ and $d(T_ix,T_jT_iy)\leq \psi (d(x,T_iy))d(x,T_iy),~(1 \leq i\neq j \leq 2),$ for $x, y$ in $A_i$ with $(x,T_iy) \in E(G).$ 
 Further assuming $G$ in $A$ is weakly connected, in Theorem \ref{fixedpoint 1},
   we get the uniqueness of the common fixed point. For this, suppose $p, q$ are two common fixed points of $(T_1,T_2)$. Then $(p,q) \in E(G)$ and so $d(p, q)=d(T_1p, T_2q)\leq \psi(d(p,q)) d(p,q)$. This is possible only if $p=q$. We say that $G$ is a weak friendship graph on $A$, if for every pair of points in $A$ has a common neighbor in $A$ (i.e., for $x,y\in A$, there exists $u \in A$ such that $(u,x)$ as well as $(u,y)$ in $E(G))$. Suppose $p, q$ are two common fixed points of $(T_1, T_2)$. Then there exists $r \in A$ such that $(r,p)$ and $(r,q)$ in $E(G)$. Thus $(Tr,p) \in E(G)$ for $n \geq 0$, where $T=T_2\circ T_1$. Hence $d(p,T^nr) \leq \psi(d(p,r))^nd(p,r)$ for $n \geq 0$. In a similar way, we have $d(q,T^nr) \leq \psi(d(q,r))^nd(q,r)$ for $n \geq 0$. Therefore we have $d(p,q)=0$. Thus, we have the following uniqueness theorem.
 
 \begin{theorem}\label{uniqueness of fixed points}
 Adding either $G$ is weakly connected or weak friendship graph on $A$ to Theorem \ref{fixedpoint 1}, we obtain uniqueness of the common fixed point of $(T_1, T_2)$, here for $x, y$ in $A_i$ with $(x,T_iy) \in E(G)$,
  $(T_ix,T_jT_iy)\in E(G)$ and $d(T_ix,T_jT_iy)\leq \psi (d(x,T_iy))d(x,T_iy),~(1 \leq i\neq j \leq 2).$ 
 \end{theorem}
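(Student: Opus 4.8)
The plan is to prove Theorem \ref{uniqueness of fixed points} by first invoking Theorem \ref{fixedpoint 1} to secure the \emph{existence} of a common fixed point $p$ of $(T_1,T_2)$ in $A\cap B$ (the stronger edge condition ``$(T_ix,T_jT_iy)\in E(G)$ whenever $(x,T_iy)\in E(G)$'' implies the hypothesis of Theorem \ref{fixedpoint 1} upon setting $y=x$, so all the assumptions of that theorem are in force), and then to show that any second common fixed point $q$ must coincide with $p$. So the whole burden is uniqueness, and the argument splits into the two cases named in the statement.

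\medskip
\noindent\textbf{Case 1: $G$ in $A$ is weakly connected.} First I would note that since $p,q\in A\cap B$ are common fixed points, in particular $p\in A$ and $q=T_2q=T_2T_1q$. Weak connectedness of $G$ in $A$ gives a path $p=z_0,z_1,\dots,z_k=q$ in $\tilde G$ inside $A$. I would run the following argument along each edge of this path: if $(z_{\ell},z_{\ell+1})\in E(G)$ (after orienting suitably, using that $A$ has property $(*)$ only if needed — actually for a single edge no limit is involved), then applying the contraction inequality with $x=q$ (so $T_iy$ plays the role of $p=T_1p$) one gets $d(p,q)=d(T_1p,T_2T_1q)\le\psi(d(p,q))\,d(p,q)$; since $\psi(d(p,q))<1$ this forces $d(p,q)=0$ when the two endpoints are fixed points. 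The subtlety is that intermediate path vertices $z_\ell$ need not be fixed points, so I would instead iterate $T=T_2\circ T_1$ along the edge: from $(z_\ell,z_{\ell+1})\in E(\tilde G)$ and the edge-preserving property, $(T^nz_\ell,z_{\ell+1})\in E(G)$ for all $n$, and $d(z_{\ell+1},T^{n}z_\ell)\le\psi(d(z_{\ell+1},z_\ell))^n d(z_{\ell+1},z_\ell)\to 0$, so $T^nz_\ell\to z_{\ell+1}$. Applying this telescoping from $z_0=p$ (which is $T$-fixed) along the path shows all $z_\ell$ have the same $\tilde G$-class behaviour, and comparing the tails $T^nz_\ell\to z_{\ell+1}$ with $T^np=p$ using the weak-connectedness bridge, together with property $(*)$ to push edges to the limits, yields $d(p,q)=0$. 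In fact, the cleanest route mirrors the remark already in the text: once $(p,q)\in E(\tilde G)$ is established (which follows because both lie in one weakly connected component and the edge/limit machinery of property $(*)$ lets us reduce to an actual edge), the one-line estimate $d(p,q)=d(T_1p,T_2T_1q)\le\psi(d(p,q))d(p,q)$ closes it.

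\medskip
\noindent\textbf{Case 2: $G$ is a weak friendship graph on $A$.} This is the case spelled out essentially verbatim in the paragraph preceding the theorem, and I would simply formalize it. Given common fixed points $p,q$, weak friendship supplies $r\in A$ with $(r,p),(r,q)\in E(G)$. Since $(r,p)\in E(G)$ and $p=T_1p=T_2T_1p$, the strengthened contraction condition gives $(T_2T_1r,p)=(Tr,p)\in E(G)$, and inductively $(T^nr,p)\in E(G)$ for all $n\ge 0$ with $d(p,T^{n+1}r)\le\psi(d(p,r))\,d(p,T^nr)$, hence $d(p,T^nr)\le\psi(d(p,r))^n d(p,r)\to 0$, i.e.\ $T^nr\to p$. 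By the identical argument using the edge $(r,q)$ one gets $T^nr\to q$. Uniqueness of limits in the metric space gives $p=q$. No completeness or property $(*)$ is needed here beyond what Theorem \ref{fixedpoint 1} already assumes.

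\medskip
\noindent\textbf{Main obstacle.} The routine parts are the geometric-series telescoping and the single-edge contraction estimate; these are mechanical. The genuinely delicate point is Case 1: turning ``$p$ and $q$ lie in the same weakly connected component of $G$ restricted to $A$'' — which a priori only gives a \emph{path in the undirected graph} $\tilde G$, with edges possibly pointing the wrong way and intermediate vertices not fixed by $T$ — into a situation where the contraction inequality (which needs a \emph{directed} edge of a specific shape $(x,T_iy)$) can actually be applied. The resolution is to iterate $T$ along each path-edge so that the moving point converges to the next vertex, use property $(*)$ (valid on $A\cup B$ by hypothesis) to transfer edges to these limits, and thereby propagate a usable directed edge from the fixed point $p$ out to $q$; once that edge is in hand the contraction estimate finishes in one step. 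I would present Case 2 first as the easy model and then carry out Case 1 with the iteration-plus-property-$(*)$ bridge.
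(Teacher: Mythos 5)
Your existence step and your weak-friendship case reproduce the paper's argument exactly: take a common neighbour $r$ of $p$ and $q$ in $A$, propagate the edge to $(T^nr,p)\in E(G)$, get the geometric decay $d(p,T^nr)\le\psi(d(p,r))^n d(p,r)$ and likewise for $q$, and conclude by uniqueness of limits. That half is correct and is all the paper does for that case.

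The weakly connected case is where your proposal runs into trouble: the ``iteration-plus-property-$(*)$ bridge'' does not go through as described, for two concrete reasons. First, the contraction hypothesis of this theorem only governs pairs of the form $(x,T_iy)$ with $x,y\in A_i$; an edge $(z_\ell,z_{\ell+1})$ of a path lying inside $A$ is a pair in $A\times A$, and there is no reason for $z_{\ell+1}$ to lie in $T_1(A)\subseteq B$, so the inequality simply cannot be invoked on a generic edge of the path. Second, even where it can be invoked, the estimate moves \emph{both} arguments, $d(T_ix,T_jT_iy)\le\psi(d(x,T_iy))\,d(x,T_iy)$, so your claimed decay $d(z_{\ell+1},T^nz_\ell)\le\psi(d(z_{\ell+1},z_\ell))^n d(z_{\ell+1},z_\ell)$ is valid only when $z_{\ell+1}$ is itself a common fixed point --- exactly the property the intermediate vertices lack, as you yourself note without resolving it. What the paper actually does in this case is the one-liner you call the ``cleanest route'': it asserts $(p,q)\in E(G)$ outright and computes $d(p,q)=d(T_1p,T_2T_1q)\le\psi(d(p,q))\,d(p,q)$, which is a legitimate instance of the hypothesis because $q=T_1q$ puts the pair in the required form $(p,T_1q)$ with $p,q\in A_1$. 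So the step you correctly single out as the delicate one --- upgrading ``$p$ and $q$ lie in the same weakly connected component of $G$ in $A$'' to an actual directed edge between them --- is left unjustified in the paper and is not repaired by your bridge; a complete argument for this case needs either a stronger connectivity hypothesis or a genuinely different mechanism for producing a usable edge $(p,T_1q)$ or $(q,T_1p)$.
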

 
  In \cite{Jachymski 2008}, the authors introduced a type of continuous mappings on a similar setting and therein proved the existence of a fixed point. A map $T$ on a metric space with a graph $(X,d,G)$ is called orbitally $G$-continuous if for all $x, p\in X$ and any sequence $\{k_j\}$ of natural numbers, 
 $T^{k_j}x\to p$ and $(T^{k_j}x,T^{k_{j+1}}x)\in E(G)$ for $j\in \mathbb{N}$ imply $T(T^{k_j}x)\to Tp.$ 

  It is to be observed that the assumption $``A\cup B$ has property $(*)$" in Theorem \ref{fixedpoint 1} can be replaced by ``$T$ on $A_1\cup A_2$ is orbitally $G$-continuous", here $Tx\in A_2$ if $Tx=T_1x$ and $Tx\in A_1$ if $Tx=T_2x$ for $x\in A\cup B$. Thus, one can obtain Theorem 2.4 of \cite{Neito 2005} as a consequence of Theorem \ref{uniqueness of fixed points}.

 Now, we illustrate Theorem \ref{uniqueness of fixed points} with an example.
  
 \begin{example}\label{example of fixed point 2}
 Suppose $X=\mathscr{C}[0,1],$ the linear space of all complex valued
continuous functions over the interval $[0,1]$ with the norm $\|f\|=\max\left\{\displaystyle\|f_1\|_\infty, \|f_2\|_\infty\right\},$ where $f=f_1+if_2,$ $f_j$ is a real valued function $(1\leq j\leq 2).$ Set $A=\{f_1+if_2\in X: 0\leq f_1(t)\leq 1, f_2(t)=0~\mbox{for}~t\in [0,1]\}$ and $B=\{g_1+ig_2\in X: g_1(t)=0,~0\leq g_2(t)\leq 1 ~\mbox{for}~t\in [0,1]\}.$ Consider a graph $G$ on $X$ such that $V(G)=X$ and $E(G)=\{(f,g)\in A\cup B\times B\cup A: d(f,g)<1\}.$ Let $\psi: [0,\infty)\to [0, 1)$ be a non-decreasing map. Define $T_1:A\to B$ and $T_2:B\to A$ for $f=f_1+if_2\in A\cup B,$ by
 \[
 	T_1f=
 	\begin{cases}
		f_2+i \psi \left(\displaystyle\|f_1\|_\infty\right) f_1 ~&\text{if}~f=f_1+if_2\in A, 0\leq f_1(t)<1, \\
 		f_2+i~&\text{if}~f=f_1+if_2\in A, f_1(t)=1;
 	\end{cases}
 \] 
 \[
 	T_2f=
 	\begin{cases}
 		 \psi \left(\displaystyle\|f_2\|_\infty \right) f_2 +if_1~&\text{if}~f=f_1+if_2\in B, 0\leq f_2(t)<1\\
 		f_1+if_1~&\text{if}~f=f_1+if_2\in B, f_2(t)=1.
 	\end{cases}
 \]
 
 A simple computation leads us that $(T_1,T_2)$ is a cyclic $G$-$\psi$-contraction on $A\cup B,$ whereas $T$ is not a cyclic contraction, here $Tf=T_1f$ if $f\in A$ and $Tf=T_2f$ if $f\in B.$ It's clear that $A$ is complete, $A\cup B$ has property $(*)$ and for $0\leq \alpha<1,$ the constant function $\alpha\in A$ satisfies $\left(\alpha, T_1(\alpha)\right)\in E(G)$. Also, $G$ is weakly connected. Hence by Theorem \ref{fixedpoint 1}, $(T_1, T_2)$ has a unique common fixed point, viz., the constant function $0.$ 

 \end{example} 

 \section{Application to the PBVPs}
  
 Let $I=[0,T], T>0$ be a time interval and for a set $J$, $L^1(J)$ and $AC(J)$ denote the set of all real valued integrable and absolutely conintuous functions on $J$ respectively.
 For two functions $u,v \in \mathscr{C}(I)$, the real valued continuous function space on $I$ with the supremum norm, we say that $u \leq v$, if $u(t) \leq v(t)$ for all $t \in I$.
 Consider the following nonlinear periodic boundary value problem (abbrev. PBVP):
 \begin{eqnarray}
 u'(t)&=&f_1(t,u(t)), ~u(0)=u(T)\label{initial equation}
 \end{eqnarray}
 for $t\in I$ and $f\in L^1(I\times \mathbb{R}).$ A function $w\in AC(I)$ is said to be a lower (respectively, an upper) solution  for (\ref{initial equation}) if 
 \begin{eqnarray*}
 w'(t)\leq f_1(t,w(t))~\mbox{a.e.}~t\in I~\mbox{and}~ w(0)\leq w(T)
 \end{eqnarray*}
 (respectively, $w$  satisfies the reverse inequalities). The method of lower and upper solutions have undergone an effective mechanism in the existence of a solution ($w\in AC(I)$ that satisfies (\ref{initial equation})) of (\ref{initial equation}), for more details the reader can refer to 
 [13-16].
  The following is a canonical form of a solution for the PBVP (\ref{initial equation}). 

 \begin{lemma}\cite{Lakshmikantham}\label{Lakshmikantham}
 Let $\mathbb{E}$ be a Banach space and $f:I\times \mathbb{E}\to \mathbb{E}.$ A function $u:I\to \mathbb{E}$ is a solution of the periodic boundary value problem (\ref{initial equation}) on $I$ if and only if $u$ satisfies for any $p\in L^1(I,\mathbb{R}),$ with $P(t)=\displaystyle \int_{s=0}^{t}p(s)ds$ nonzero at $t=T,$ the integral equation
 \begin{eqnarray*}
 u(t)&=& e^{-P(t)} \displaystyle \int_{s=0}^{t} e^{P(s)}[f_1(s,u(s))+p(s)u(s)]ds \\ &+& \frac{e^{-P(t)}}{e^{P(T)}-1} \int_{s=0}^{T} e^{P(s)}[f_1(s,u(s))+p(s)u(s)]ds.
 \end{eqnarray*}
 \end{lemma}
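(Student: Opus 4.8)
The claim is the classical reduction of the periodic boundary value problem $u'(t)=f_1(t,u(t))$, $u(0)=u(T)$, to a Hammerstein-type integral equation after perturbing by a multiple $p(s)u(s)$ on both sides. The plan is to prove the two implications separately. For the forward direction, suppose $u$ solves \eqref{initial equation}. Add $p(t)u(t)$ to both sides to obtain $u'(t)+p(t)u(t)=f_1(t,u(t))+p(t)u(t)$, and recognize the left-hand side, after multiplying by the integrating factor $e^{P(t)}$, as $\frac{d}{dt}\bigl(e^{P(t)}u(t)\bigr)$ (here $P'(t)=p(t)$ a.e., which is exactly why $P(t)=\int_0^t p(s)\,ds$). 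Integrate from $0$ to $t$:
\begin{eqnarray*}
e^{P(t)}u(t)-u(0) &=& \int_{s=0}^{t} e^{P(s)}\bigl[f_1(s,u(s))+p(s)u(s)\bigr]\,ds.
\end{eqnarray*}
Next I would evaluate this at $t=T$, use $P(0)=0$ so that $u(0)=u(0)\,e^{P(0)}$, impose the periodicity $u(0)=u(T)$, and solve the resulting linear equation for $u(0)$; since $e^{P(T)}\neq 1$ by hypothesis, this yields
\begin{eqnarray*}
u(0) &=& \frac{1}{e^{P(T)}-1}\int_{s=0}^{T} e^{P(s)}\bigl[f_1(s,u(s))+p(s)u(s)\bigr]\,ds.
\end{eqnarray*}
Substituting this expression for $u(0)$ back into the integrated identity and multiplying through by $e^{-P(t)}$ produces precisely the stated integral equation.

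For the converse, suppose $u$ satisfies the integral equation. Write $g(s)=e^{P(s)}\bigl[f_1(s,u(s))+p(s)u(s)\bigr]$, so that $u(t)=e^{-P(t)}\bigl[\int_{s=0}^{t} g(s)\,ds + \frac{1}{e^{P(T)}-1}\int_{s=0}^{T} g(s)\,ds\bigr]$. Multiplying by $e^{P(t)}$ shows that $e^{P(t)}u(t)$ equals $\int_0^t g(s)\,ds$ plus a constant, hence is absolutely continuous with a.e.\ derivative $g(t)$. Differentiating $e^{P(t)}u(t)$ by the product rule gives $e^{P(t)}u'(t)+p(t)e^{P(t)}u(t)=g(t)=e^{P(t)}\bigl[f_1(t,u(t))+p(t)u(t)\bigr]$ a.e.; cancelling the nonzero factor $e^{P(t)}$ and subtracting $p(t)u(t)$ recovers $u'(t)=f_1(t,u(t))$ a.e. Finally, evaluating the integral equation at $t=0$ and at $t=T$ and comparing (the $t=0$ value gives $u(0)=\frac{1}{e^{P(T)}-1}\int_0^T g$, while the $t=T$ value simplifies to the same quantity after combining $e^{-P(T)}$ with the two integral terms) yields $u(0)=u(T)$, so the boundary condition holds.

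The routine part is the bookkeeping with the integrating factor and the constants $P(0)=0$, $e^{P(T)}\neq 1$; there is no genuine obstacle, as this is a direct computation. The only point requiring mild care is the regularity claim in the converse: one must observe that $g\in L^1(I,\mathbb{R})$ (which follows from $p\in L^1$, continuity of $u$, and the integrability hypothesis on $f_1$) so that $t\mapsto\int_0^t g(s)\,ds$ is genuinely absolutely continuous and the fundamental theorem of calculus for the Lebesgue integral applies, legitimizing the a.e.\ differentiation. I would state this explicitly rather than glossing over it. Since the result is quoted from \cite{Lakshmikantham}, a brief indication along these lines suffices.
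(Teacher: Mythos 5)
Your derivation is correct and is the standard integrating-factor argument; note that the paper itself states this lemma as a quotation from \cite{Lakshmikantham} and gives no proof, so there is nothing in the text to diverge from. Your forward substitution (solving for $u(0)$ using $e^{P(T)}\neq 1$, which follows from $P(T)\neq 0$) and the converse (absolute continuity of $t\mapsto e^{P(t)}u(t)$ plus the check $u(0)=u(T)=\frac{1}{e^{P(T)}-1}\int_0^T e^{P(s)}[f_1(s,u(s))+p(s)u(s)]\,ds$) are exactly what the cited source does, and your explicit remark about $g\in L^1$ legitimizing the a.e.\ differentiation is the right point to make.
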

 
   Let's contemplate the following system of PBVPs:
 \begin{eqnarray}
 u'(t)=f_1(t,u(t)), ~u(0)=u(T);~
 v'(t)=f_2(t,v(t)),~v(0)=v(T),\label{system 1}
 \end{eqnarray}
 for $t\in I$ and $f_1,f_2\in L^1(I\times \mathbb{R}).$
 Here our aim is to find a common solution for the system (\ref{system 1}). This system can be re-written as 
 \begin{eqnarray}
 u'(t)+ \alpha u(t)&=& f_1(t, u(t))+ \alpha u(t)~\mbox{a.e.}~t\in I~\mbox{and}~ u(0)= u(T),\label{system 1.1}\\
 v'(t)+ \alpha v(t)&=& f_2(t, v(t))+ \alpha v(t)~\mbox{a.e.}~t\in I~\mbox{and}~ v(0)= v(T)\label{system 1.2}
 \end{eqnarray}  
 for an $\alpha\in \mathbb{R}.$
 Fix an $\alpha>0.$
For $u,v\in AC(I)$, one can see by Lemma \ref{Lakshmikantham} that $(u, v)$ is a solution of the system (\ref{system 1}) if and only if 
 \begin{eqnarray*}
 u(t)=\displaystyle \int_{s=0}^{T}G(t,s)[f_1(s,u(s))+\alpha u(s)]ds;~~
 v(t)=\displaystyle \int_{s=0}^{T}G(t,s)[f_2(s,v(s))+\alpha v(s)]ds,
 \end{eqnarray*}
where
 \[
 G(t,s)=
 \begin{cases}
 \frac{e^{\alpha (T+s-t)}}{e^{\alpha T}-1} & \text{if}~  0\leq s< t\leq T,\\
 \frac{e^{\alpha (s-t)}}{e^{\alpha T}-1} & \text{if}~  0\leq t< s\leq T.
 \end{cases}
 \]

  Consider a graph $G$ on $X=\mathscr{C}(I)$ with the vertex set $V(G)=X$ and the edge set $E(G)=\{(x,y)\in X\times X: x\leq y\}.$ Set
 {\small{\begin{eqnarray*}
 A_1 &=& \{x\in X: x(0)=x(T), x(t)=\displaystyle\int_{s=0}^{T}G(t,s)[f_1(s,y(s))+\alpha y(s)]ds~\mbox{for some}~y\in X\},\\
 B_1 &=& \{x\in X: x(0)=x(T), x(t)=\displaystyle\int_{s=0}^{T}G(t,s)[f_2(s,y(s))+\alpha y(s)]ds~\mbox{for some}~y\in X\}.
 \end{eqnarray*}}}
  To see $A_1$ is non-empty, let $y(s)=k$ be a constant function on $I$ for some $k\in \mathbb{R}.$ Then $x(t)=\displaystyle\int_{s=0}^{T}G(t,s)[f_1(s,k)+\alpha k]ds$ is an absolutely continuous function and since $G(0,s)=G(T,s)$ for $s\in I,$ we have  $x(0)=\displaystyle\int_{s=0}^{T}G(0,s)[f_1(s,k)+\alpha k]ds=\displaystyle\int_{s=0}^{T}G(T,s)[f_1(s,k)+\alpha k]ds=x(T).$ Hence $x\in A_1.$ Similarly, $B_1\neq \emptyset.$
 The following theorem ensures the existence of a common solution for the system (\ref{system 1}) if $f_i$ is a Carath\'{e}odory function, $i=1, 2.$ A function $f:I\times \mathbb{R}\to \mathbb{R}$ is said to be Carath\'{e}odory if $f(\cdot,s)$ is measurable for each fixed $s\in \mathbb{R}$ and $f(t,\cdot)$ is continuous for each fixed $t\in I.$ 
 \begin{theorem}\label{differential theorem}
 Let us consider the system (\ref{system 1}). Assume the following:
 \begin{itemize}
 \item[(i)] $f_i$ is a Carath\'{e}odory function, $i=1,2;$
 \item[(ii)] there exists $L^1$ functions $m_1, m_2$ on $I$ such that $|f_i(t,\cdot)|\leq m_i(t)$ for $t\in I$ and $i=1, 2;$
 \item[(iii)] there exists a lower solution $w$ for (\ref{system 1.1});
 \item[(iv)] there exist $\alpha>0$ and a function $h:I\to (0,\infty)$ with $\displaystyle\sup_{t\in I}h(t)<\alpha$ such that $|f_1(t,s_2)+\alpha s_2-\left(f_2(t,s_1)+\alpha s_1\right)|\leq h(t) (s_2-s_1)$ ~for almost all $t\in I$ and $s_1, s_2\in \mathbb{R}$ with $s_1\leq s_2;$ 
 \item[(v)] if $x(t)\leq y(t)=\displaystyle\int_{0}^{T}G(t,s)\left(f_i(s,x(s))+\alpha x(s) \right)ds$ for some $x\in X$ and $i\in \{1,2\},$ then $y(t)\leq \displaystyle\int_{0}^{T}G(t,s)\left(f_j(s,y(s))+\alpha y(s) \right)ds$ for $j\in \{1,2\}$ and $i\neq j.$
 \end{itemize}
 Then the system (\ref{system 1}) has a common solution.
 \end{theorem}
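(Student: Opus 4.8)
The plan is to reduce the existence of a common solution of (\ref{system 1}) to the existence of a common fixed point of a pair of integral operators, and then invoke Theorem~\ref{fixedpoint 1}. By Lemma~\ref{Lakshmikantham}, applied with the constant perturbation $\alpha$ (so that $e^{\alpha T}-1\neq0$), a pair $(u,v)\in AC(I)\times AC(I)$ solves (\ref{system 1}) if and only if $u=S_1u$ and $v=S_2v$, where $S_i\colon\mathscr{C}(I)\to\mathscr{C}(I)$ is given by $S_ix(t):=\int_0^TG(t,s)\bigl[f_i(s,x(s))+\alpha x(s)\bigr]\,ds$ with the Green's function $G$ of the statement; by hypotheses (i)--(ii) the integrand lies in $L^1(I)$, so each $S_i$ is well defined, takes values in $AC(I)$, and (since $G(0,s)=G(T,s)$) respects the periodic boundary condition. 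Thus a \emph{common} solution of (\ref{system 1}) is exactly a point $q$ with $S_1q=q=S_2q$. Since the range of $S_1$ is $A_1$ and that of $S_2$ is $B_1$, I would work in the metric space with a graph $(X,d,G)$ of the statement, replace $A_1,B_1$ by their closures $\overline{A_1},\overline{B_1}$ (closed, hence complete, in $\mathscr{C}(I)$), and put $T_1:=S_2|_{\overline{A_1}}\colon\overline{A_1}\to\overline{B_1}$ and $T_2:=S_1|_{\overline{B_1}}\colon\overline{B_1}\to\overline{A_1}$, which are well defined because $S_1,S_2$ are continuous (immediate from (i)--(ii) and dominated convergence). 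A common fixed point of $(T_1,T_2)$ is then a point $q$ with $S_2q=q=S_1q$, i.e. a common solution of (\ref{system 1}).

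Next I would check that $(T_1,T_2)$ is a $G$-$\psi$-contraction with the constant (hence non-decreasing) map $\psi\equiv\frac1\alpha\sup_{t\in I}h(t)$, which lies in $[0,1)$ by (iv). Splitting the integral at $s=t$ yields the identity $\int_0^TG(t,s)\,ds=\frac1\alpha$ for every $t\in I$. For the edge condition: if $x\in\overline{A_1}$ and $x\le T_1x=S_2x$, then hypothesis (v) (with $i=2$, $y=S_2x$) gives $S_2x\le S_1(S_2x)=T_2T_1x$, i.e. $(T_1x,T_2T_1x)\in E(G)$; applying (v) with $i=1$ handles the other leg. For the contraction estimate, with $x\le T_1x$ put $v:=T_1x=S_2x\ge x$; then, using $G\ge0$, the bound $|f_1(s,v(s))+\alpha v(s)-(f_2(s,x(s))+\alpha x(s))|\le h(s)(v(s)-x(s))$ from (iv), and the identity above,
\[
\bigl|T_1x(t)-T_2T_1x(t)\bigr|
=\Bigl|\int_0^TG(t,s)\bigl([f_2(s,x(s))+\alpha x(s)]-[f_1(s,v(s))+\alpha v(s)]\bigr)\,ds\Bigr|
\le\frac{\sup_sh(s)}{\alpha}\,\|v-x\|,
\]
so that $d(T_1x,T_2T_1x)\le\psi(d(x,T_1x))\,d(x,T_1x)$; the estimate for $T_2$ is obtained in the same way from (iv) (note that setting $s_1=s_2$ in (iv) forces $f_1=f_2$ a.e., so the two legs are in fact symmetric). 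Finally $\overline{A_1}$ is complete and $\overline{A_1}\cup\overline{B_1}$ has property $(*)$: if $x_n\to x$ uniformly with $x_n\le x_{n+1}$, then $x_n\le x_m$ for all $m\ge n$, and letting $m\to\infty$ gives $x_n\le x$.

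It remains to produce a point of $X_{T_1}^{\overline{A_1}}$, and here hypothesis (iii) enters. Let $w$ be a lower solution of (\ref{system 1.1}), so $w'+\alpha w\le f_1(\cdot,w)+\alpha w$ a.e. and $w(0)\le w(T)$, and set $z:=S_1w$, the unique $AC$ solution of $z'+\alpha z=f_1(\cdot,w)+\alpha w$ with $z(0)=z(T)$. Then $(w-z)'+\alpha(w-z)\le0$ a.e., so $t\mapsto e^{\alpha t}(w(t)-z(t))$ is non-increasing; combining this with $z(0)=z(T)$ and $w(0)\le w(T)$ forces $w(T)\le z(T)$ and hence $w\le z=S_1w$. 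Taking $x_0:=S_1w\in A_1$, hypothesis (v) (with $i=1$, $x=w$, $y=z$) gives $S_1w\le S_2(S_1w)=T_1x_0$, i.e. $x_0\in X_{T_1}^{\overline{A_1}}$. Theorem~\ref{fixedpoint 1} now applies: $\overline{A_1}\cap\overline{B_1}\neq\emptyset$ and $(T_1,T_2)$ has a common fixed point $q\in\overline{A_1}\cap\overline{B_1}$, i.e. $S_1q=q=S_2q$; by the first paragraph $q$ is a common solution of (\ref{system 1}).

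The step I expect to be the main obstacle is the verification that $(T_1,T_2)$ is a $G$-$\psi$-contraction --- in particular obtaining the contraction inequality for \emph{both} $T_1$ and $T_2$ out of the one-sided Lipschitz-type hypothesis (iv), where the Green's-function identity $\int_0^TG(t,s)\,ds=1/\alpha$ and the monotonicity hypothesis (v) have to be combined with care. The remaining ingredients --- well-definedness and continuity of the $S_i$, completeness of $\overline{A_1}$, property $(*)$, and the lower-solution comparison producing $x_0$ --- are routine.
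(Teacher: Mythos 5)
Your proposal is correct and follows essentially the same route as the paper: reduce via Lemma~\ref{Lakshmikantham} to a common fixed point of the two crossed integral operators, verify that they form a $G$-$\psi$-contraction with $\psi\equiv\frac{1}{\alpha}\sup_{t\in I}h(t)$ using the identity $\int_0^TG(t,s)\,ds=1/\alpha$ and hypotheses (iv)--(v), use the lower solution to produce a starting point in $X_{T_1}^{A}$, and invoke Theorem~\ref{fixedpoint 1}. You also supply details the paper leaves implicit (the comparison $w\le S_1w$, property $(*)$, and the observation that taking $s_1=s_2$ in (iv) forces $f_1=f_2$ a.e., which is what makes the second contraction leg symmetric), but the underlying argument is the same.
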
  

 \begin{proof}
 Let $X, A_1, B_1$ and $G$ be as above. Set $A=cl(A_1)$ and $B=cl(B_1),$ where $cl(A_1)$ and $cl(B_1)$ are the closures of $A_1$ and $B_1$ respectively. It's easy to verify that $A\cup B$ has property $(*).$ Define $F_1:A\to B$ and $F_2: B\to A$ By
 {\small{\[ 
 	F_1u(t)= \displaystyle\int_{s=0}^{T}G(t,s)[f_2(s,u(s))+\alpha u(s)]ds;~
	F_2u(t)= \displaystyle\int_{s=0}^{T}G(t,s)[f_i(s,u(s))+\alpha u(s)]ds
 \]}} 
respectively. Since $G(0,s)=G(T,s),~F_1(A_1)\subseteq B_1$ and $F_2(B_1)\subseteq A_1.$
 For $x\in A\setminus A_1$ there exists a sequence $\{x_n\}$ in $A_1$ such that $\|x_n-x\|\to 0$. Therefore, for $n\geq 1,$ $F_1x_n\in B_1$ and there is $y_n\in X$ such that $x_n(t)=\displaystyle\int_{s=0}^{T}G(t,s)[f_1(s,y_n(s))+\alpha y_n(s)]ds.$ For given $\epsilon >0$, there exists $N\in \mathbb{N}$ such that for $n\geq N,~\left|\left(f_2(s,x_n(s))+\alpha x_n(s)\right)-\left(f_2(s,x(s))+\alpha x(s)\right)\right|<\alpha \epsilon.$ Then for $n\geq N$, we have
 {\small{\begin{eqnarray*}
 |F_1x_n(t)-F_1x(t)| &\leq & \displaystyle\int_{s=0}^{T}G(t,s)\left|\left(f_2(s,x_n(s))+\alpha x_n(s)\right)-\left(f_2(s,x(s))+\alpha x(s)\right)\right|ds\\
 &\leq & \alpha \epsilon~ \displaystyle\sup_{t\in I} \int_{s=0}^{T}G(t,s) ds=\frac{\alpha \epsilon}{\alpha}=\epsilon.
 \end{eqnarray*}}}
 Thus, that $F_1x\in B.$ Hence $F_1(A)\subseteq B.$ In a similar fashion, one can show that $F_2(B)\subseteq A.$ 
 Let $x\in A$ with $(x,F_1x)\in E(G).$ By the assumption $(iv),$ we have for $t\in I,$ 
 \begin{eqnarray*}
 F_1x(t)&= &\displaystyle\int_{s=0}^{T}G(t,s)[f_2(s,x(s))+\alpha x(s)]ds\\
 &\leq &\displaystyle\int_{s=0}^{T}G(t,s)[f_2(s,F_1x(s))+\alpha F_1x(s)]ds\\
 &=& F_2F_1x(t).
 \end{eqnarray*}
 Hence $(F_1x,F_2F_1x)\in E(G).$ Similarly, one can prove that if $y\in B$ such that $(y,F_2y)\in E(G),$ then $(F_2y, F_1F_2y)\in E(G).$
A simple calculation leads that if $w$ is a lower solution of (\ref{system 1.1}), then $w(t)\leq x_0(t),$ where  $x_0(t)=\displaystyle\int_{s=0}^{T}G(t,s)[f_1(s,w(s))+\alpha w(s)]ds$ for $t\in I.$ By $(iv),$ for $t\in I$
 we have  $x_0(t)\leq \displaystyle\int_{s=0}^{T}G(t,s)[f_2(s,x_0(s))+\alpha x_0(s)]ds=F_1x_0(t).$ Thus, $(x_0,F_1x_0)\in E(G)$.
Now, for $x\in A$ with $(x,F_1x)\in E(G),$ we have, for $t\in I,$
 {\small{\begin{eqnarray*}
 |F_1x(t)-F_2F_1x(t)| &\leq & \displaystyle\int_{s=0}^{T}G(t,s)\left|(f_2(s,x(s))+\alpha x(s))-(f_1(s,F_1x(s))+\alpha F_1x(s))\right|ds\\
 &\leq & \displaystyle\int_{s=0}^{T}G(t,s) h(s)\left|x(s)- F_1x(s)\right|ds\\
 &\leq & \|x-F_1x\|_{\infty} \displaystyle\int_{s=0}^{T}G(t,s) h(s)ds\\
 &\leq & \displaystyle\sup_{t\in I} h(t) \|x-F_1x\|_{\infty} \cdot \frac{1}{\alpha}\\
 &=& \beta \|x-F_1x\|_{\infty}.
 \end{eqnarray*}}}
 Here $\beta= \frac{\displaystyle\sup_{t\in I} h(t)}{\alpha} <1.$
 Analogously, if $y\in B$ such that $(y,F_2y)\in E(G),$ then $\|F_2y-F_1F_2y\|_{\infty}\leq \beta \|y-F_2y\|_{\infty}.$ Hence, by Theorem \ref{fixedpoint 1}, $(F_1,F_2)$ has a common fixed point, say $u.$ In view of Lemma \ref{Lakshmikantham}, $u$ is a common solution of the system (\ref{system 1}).
 \end{proof}
 
 In the above proof, the assumption on continuity of $f_1$ and $f_2$ is used to show $F$ is cyclic. Further, if $f_1=f_2,$ then one can relax the aformentioned assumtion by considering $A=B=X$. In this case, $G$ is a weak friendship on $X.$ Thus using Theorem \ref{uniqueness of fixed points} and Theorem \ref{differential theorem} we have the following:
 \begin{theorem}\label{differential corollary}
 Consider the system (\ref{initial equation}) with $f(t,s)$ Carath\'{e}odory and bounded in $t\in I$. Suppose there exist $\alpha>0$, a function $h:I\to (0,\infty)$ such that $\displaystyle\sup_{t\in I}h(t)<\alpha$ and $0\leq f(t,s_2)+\alpha s_2-(f(t,s_1))+\alpha s_1)\leq h(t)(s_2-s_1)$ for $s_1, s_2\in \mathbb{R}$ with $s_1 \leq s_2$. If (\ref{initial equation}) has a lower solution, then there exists a unique solution for the same.
 \end{theorem}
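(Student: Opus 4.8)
The plan is to deduce Theorem \ref{differential corollary} from Theorem \ref{differential theorem} (for existence) and Theorem \ref{uniqueness of fixed points} (for uniqueness), by specialising the system (\ref{system 1}) to the diagonal case $f_1=f_2=f$. With this choice a common solution of (\ref{system 1}) is precisely a solution of (\ref{initial equation}), and (\ref{system 1.1}) is just (\ref{initial equation}) rewritten. So for existence I would verify hypotheses (i)--(v) of Theorem \ref{differential theorem}. Items (i) and (ii) are immediate restatements of ``$f$ Carath\'{e}odory'' and ``$f$ bounded in $t$'' (take $m_1=m_2=m$), and (iii) holds because a lower solution of (\ref{initial equation}) is by definition a lower solution of (\ref{system 1.1}). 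For (iv), the assumed inequality $0\le f(t,s_2)+\alpha s_2-(f(t,s_1)+\alpha s_1)\le h(t)(s_2-s_1)$ for $s_1\le s_2$ gives at once $|f(t,s_2)+\alpha s_2-(f(t,s_1)+\alpha s_1)|\le h(t)(s_2-s_1)$.

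The only hypothesis requiring a genuine, if short, argument is (v): if $x\le y$ with $y(t)=\int_{0}^{T}G(t,s)(f(s,x(s))+\alpha x(s))\,ds$, one must show $y(t)\le\int_{0}^{T}G(t,s)(f(s,y(s))+\alpha y(s))\,ds$. The left inequality in (iv) says $s\mapsto f(t,s)+\alpha s$ is non-decreasing, and $G(t,s)\ge 0$; since $x(s)\le y(s)$ this gives $f(s,x(s))+\alpha x(s)\le f(s,y(s))+\alpha y(s)$ pointwise, and integrating against $G(t,\cdot)$ yields the claim. Hence Theorem \ref{differential theorem} applies and (\ref{initial equation}) has a solution $u$.

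For uniqueness I would use the diagonal form of the construction in the proof of Theorem \ref{differential theorem}: set $A=B=X=\mathscr{C}(I)$ and work with the single operator $Fu(t)=\int_{0}^{T}G(t,s)(f(s,u(s))+\alpha u(s))\,ds$, which maps $X$ into $X$ trivially, so no cyclicity (and hence no continuity of $f$ beyond what is already assumed) is needed. On $X$ with $E(G)=\{(x,y):x\le y\}$, $G$ is a weak friendship graph, since any $x,y\in X$ have the common lower bound $\min\{x,y\}\in X$; and, as in the proof of Theorem \ref{differential theorem}, the lower solution $w$ yields a point $x_0$ with $x_0\le Fx_0$, so the hypothesis $X^{A}_{T_1}\ne\emptyset$ of Theorem \ref{fixedpoint 1} holds (with $T_1=T_2=F$). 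It then remains to verify the strengthened hypothesis of Theorem \ref{uniqueness of fixed points} for $(F,F)$: for $x,y\in X$ with $x\le Fy$, monotonicity of $s\mapsto f(t,s)+\alpha s$ gives $Fx\le F(Fy)$, while the upper bound in (iv), together with $\sup_{t\in I}\int_{0}^{T}G(t,s)\,ds=\frac{1}{\alpha}$ (already computed in the proof of Theorem \ref{differential theorem}), gives $\|Fx-F(Fy)\|_{\infty}\le\beta\|x-Fy\|_{\infty}$ with $\beta=\frac{\sup_{t\in I}h(t)}{\alpha}<1$. Taking $\psi\equiv\beta$, Theorem \ref{fixedpoint 1} produces a common fixed point and Theorem \ref{uniqueness of fixed points} (weak friendship version) makes it unique; by Lemma \ref{Lakshmikantham} it is the unique solution of (\ref{initial equation}).

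The only content beyond bookkeeping is the two order-preservation steps -- hypothesis (v) and the relation $(Fx,F(Fy))\in E(G)$ in the uniqueness hypothesis -- and both reduce to the single observation that the left half of the inequality in (iv) forces $s\mapsto f(t,s)+\alpha s$ to be non-decreasing, which is exactly what makes $F$ monotone. I therefore expect no serious obstacle; the work lies in assembling the already-established theorems correctly.
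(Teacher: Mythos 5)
Your proposal is correct and follows essentially the same route as the paper: reduce to the diagonal case $f_1=f_2=f$ with $A=B=X$, observe that the left half of the hypothesis $0\leq f(t,s_2)+\alpha s_2-(f(t,s_1)+\alpha s_1)\leq h(t)(s_2-s_1)$ makes $F$ monotone (hence the order-preservation hypotheses and the weak-friendship uniqueness argument go through) while the right half gives the contraction factor $\beta=\sup_{t\in I}h(t)/\alpha<1$, and then invoke Theorems \ref{differential theorem} and \ref{uniqueness of fixed points}. The paper's own proof is a one-line version of exactly this reduction; you have simply filled in the bookkeeping it leaves implicit.
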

 \begin{proof}
 It is sufficient to prove that $Fx\leq Fy$ and $\|Fx-Fy\|\leq \beta \|x-y\|$ for $x,y\in X$ with $x\leq y.$ Here $Fx(t)=\displaystyle\int_{0}^{T}G(t,s)(f(s,x(x))+\alpha x(s))ds$ for $t\in I$ and $\beta=\displaystyle \frac{\sup_{t\in I}h(t)}{\alpha}.$ This follows from $0\leq f(t,s_2)+\alpha s_2-(f(t,s_1))+\alpha s_1)\leq h(t)(s_2-s_1)$ for $s_1, s_2\in \mathbb{R}$ with $s_1 \leq s_2$. 
 \qed 
 \end{proof}
 Finally, we illustrate the above theorem with an example.
 
 \begin{example}
 Let $I=[0,1]$ and define a function $f$ on $I\times \mathbb{R}$ by $f(t,s)=-e^t s$ for all $t\in I$ and $s\in \mathbb{R}.$ Consider the following PBVP
 \begin{equation}\label{system corollary}
  \begin{aligned}
 u'(t) &= f(t,u(t))~\mbox{for all}~ t\in I,~ u(0)=u(1).
  \end{aligned}
 \end{equation} 
 Here $f$ is continuous on $I\times \mathbb{R}.$ If $w\equiv -1,$ then $w'(t)=0< e^t=-e^t(-1)=-e^t(w)=f(t,w)$ for all $t\in I,$ i.e., $w$ is lower solution of the system (\ref{system corollary}). Choose $\alpha=e^2$ and $h(t)=e^2-e^t, t\in I.$ Then for any $v\geq u,$ we have $f(t,v)+e^2v-(f(t,u)+e^2u)=-e^t(v-u)+e^2(v-u)=(e^2-e^t)(v-u)\geq 0.$ Therefore, all the hypotheses of Theorem \ref{differential corollary} are satisfied. Hence, the system (\ref{system corollary}) has a unique solution $u_0,~u_0(t)= 0$ for all $t\in I.$
 \end{example}

\begin{acknowledgements}
The authors thank Dr. Arti Pandey (Indian Institute of Technology Ropar) for introducing the notion of friendship graph to them.
\end{acknowledgements}
%


\end{document}